\def\url@leostyle{%
  \@ifundefined{selectfont}{\def\UrlFont{\sf}}{\def\UrlFont{\small\ttfamily}}}
\let\oldlabel=\label
\def\prellabel{\marginparsep=1em
    \def\label##1{\oldlabel{##1}\ifmmode\else\ifinner\else
         \marginpar{{\footnotesize\ \\ \tt
                    ##1}}\fi\fi}}
\def\inte{\operatorname{int}}
\def\conv{\operatorname{conv}}
\def\rank{\operatorname{rank}}
\def\Im{\operatorname{Im}}
\def\Hilb{\operatorname{Hilb}}
\def\Um{\operatorname{Um}}
\def\GL{\operatorname{GL}}
\def\E{\operatorname{E}}
\def\Aff{\operatorname{Aff}}
\def\height{\operatorname{ht}}
\def\sn{\operatorname{sn}}
\def\n{\operatorname{n}}
\def\gp{\operatorname{gp}}
\def\t{\operatorname{t}}
\def\be{\mathbf e}
\def\bf{\mathbf f}
\def\bg{\mathbf g}
\def\ba{\mathbf a}
\def\bb{\mathbf b}
\def\bh{\mathbf h}
\def\bl{\mathbf l}
\def\RR{{\mathbb R}}
\def\QQ{{\mathbb Q}}
\def\ZZ{{\mathbb Z}}
\def\NN{{\mathbb N}}
\def\FF{{\mathbb F}}
\def\kk{{\mathsf k}}
\let\phi=\varphi
\let\theta=\vartheta
\let\epsilon=\varepsilon
\newtheorem{lemma}{Lemma}[section]
\newtheorem{corollary}[lemma]{Corollary}
\newtheorem{theorem}[lemma]{Theorem}
\newtheorem{proposition}[lemma]{Proposition}
\theoremstyle{definition}
\newtheorem{definition}[lemma]{Definition}
\newtheorem{remark}[lemma]{Remark}
\begin{document}

\title[Unimodular rows over monoid rings]{Unimodular rows over monoid rings}

\begin{abstract}
For a commutative Noetherian ring $R$ of dimension $d$ and a commutative cancellative monoid $M$, the elementary action on unimodular $n$-rows over the monoid ring $R[M]$ is transitive for $n\ge\max(d+2,3)$. The starting point is the case of polynomial rings, considered by A. Suslin in the 1970s. The main result completes a project, initiated in the early 1990s, and suggests a new direction in the study of $K$-theory of monoid rings. 
\end{abstract}

\author{Joseph Gubeladze}

\address{Department of Mathematics\\
         San Francisco State University\\
         1600 Holloway Ave.\\
         San Francisco, CA 94132, USA}
\email{soso@sfsu.edu}

\thanks{}

\subjclass[2010]{Primary 19B14; Secondary 20M25}

\keywords{Unimodular row, elementary action, Milnor patching, monoid ring, cancellative monoid, normal monoid}

\maketitle

\section{Introduction}\label{Intro}

An $n$-row $\ba=(a_1,\ldots,a_n)$ with entries in a commutative ring $R$ is called \emph{unimodular} if $Ra_1+\cdots+Ra_n=R$. If $R$ is Noetherian then, using prime avoidance, one can pass from $\ba$ by elementary transformations to a unimodular row $\bb=(b_1,\ldots,b_n)$, such that the height of the ideal $Rb_1+\cdots+Rb_i\subset R$ is at least $i$ for $i=1,\ldots,n$; see Section \ref{Unimodular-rows}. Here an `elementary transformation' means adding a multiple of a component to another component. In particular, if the (Krull) dimension of $R$ is $d$ and $n\ge d+2$ then every unimodular $n$-row over $R$ can be reduced, by elementary transformations, to $(1,0,\ldots,0)$. This is the basis of the classical \emph{Serre Splitting} and \emph{Bass Cancellation Theorems} \cite[Ch. 4]{Bass}. Our main result is

\begin{theorem}\label{main}
Let $R$ be a commutative Noetherian ring of dimension $d$ and $M$ be a commutative cancellative (not necessarily torsion free) monoid. Then the elementary action on unimodular $n$-rows over $R[M]$ is transitive for $n\ge\max(d+2,3)$.
\end{theorem}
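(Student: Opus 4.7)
The plan is to reduce Theorem \ref{main} to Suslin's theorem for polynomial rings through a sequence of monoid-theoretic devissages. First, any elementary reduction of a unimodular row in $\Um_n(R[M])$ uses only finitely many monomials of $M$, so I would replace $M$ by the finitely generated submonoid spanned by the supports of the two rows and a witness to unimodularity; hence $M$ may be assumed affine. Next, writing $\gp(M) = L \oplus T$ with $L \cong \ZZ^r$ and $T$ finite, I would absorb the torsion into the base by setting $R' := R[T]$. Since $R[T]$ is a finite free $R$-module with $\dim R[T] = d$, this reduces the problem to $R'[M_0]$ for an affine torsion-free monoid $M_0$, preserving the hypothesis $n \ge d+2$.

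At this stage $M$ sits as a full-rank submonoid of some $\ZZ^r$. Let $\bar M$ be its normalization and $\mathfrak{c} = (R[M] : R[\bar M])$ the conductor ideal. The square
\[
\begin{CD}
R[M] @>>> R[\bar M] \\
@VVV @VVV \\
R[M]/\mathfrak{c} @>>> R[\bar M]/\mathfrak{c}
\end{CD}
\]
is Milnor. The crucial geometric fact I would invoke is that $\mathfrak{c}$ is, up to radical, a monomial ideal of $R[\bar M]$, so both bottom rings are, modulo nilpotents, direct sums of monoid algebras over $R$ whose monoid ranks are strictly smaller than $\rank M$. This should set up an induction on $\rank M$ via the Mayer--Vietoris behavior of the orbit sets $\Um_n / \E_n$, reducing the general case to the case where $M$ itself is normal.

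For a normal affine monoid $\bar M$, I would then proceed by polyhedral induction on the cone $\RR_+\bar M$: geometric simplifications such as pulling off a free summand, filtering $\bar M$ by pyramidal (``stellar'') extensions, or excising codimension-one faces each produce Milnor or localization squares with structurally simpler monoids. The induction bottoms out at $\bar M = \ZZ_+^k$, where $R[\bar M] = R[x_1,\ldots,x_k]$ and the statement is precisely Suslin's theorem on $\Um_n(R[x_1,\ldots,x_k])$; the clause $n \ge 3$ covers the low-dimensional case $d \le 1$.

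The main obstacle is the non-normal reduction in the second paragraph. The naive Krull dimension of $R[\bar M]/\mathfrak{c}$ is $d + \rank M - 1$, much larger than $d$, so a direct Mayer--Vietoris of $\Um_n / \E_n$ would force $n$ to grow with $\rank M$, destroying the bound $n \ge d+2$. The argument must therefore exploit the monoid-theoretic structure of the residue rings --- in particular, the fact that they split, up to nilpotents, into smaller monoid algebras over $R$ --- so as to run an induction on $\rank M$ at fixed $d$. Controlling the surjectivity of $\E_n(R[\bar M]) \to \E_n(R[\bar M]/\mathfrak{c})$ and the lifting of elementary operations across the Milnor square within this combinatorial framework is where the core technical work will live.
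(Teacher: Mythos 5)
Your proposal has the right outermost skeleton (pass to affine monoids, dispose of torsion, reduce to normal-type monoids, run a polyhedral induction down to Suslin's polynomial theorem), but two of its load-bearing steps are broken or empty.

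First, the torsion reduction fails. A finitely generated cancellative monoid with torsion does \emph{not} in general split as $M_0\times T$ with $M_0$ torsion free, so $R[M]$ is not $R[T][M_0]$: for instance, the submonoid $M\subset\ZZ_+\times\ZZ_2$ generated by $(1,0)$ and $(1,1)$ has $R[M]\cong R[x,y]/(x^2-y^2)$, which is not a monoid ring over $R[\ZZ_2]$ of any torsion free monoid. The splitting $\gp(M)=L\oplus T$ is a splitting of the \emph{group} of differences, not of $M$ itself, and "absorbing $T$ into the base" is exactly the step that does not exist. This is why the paper devotes all of Section \ref{Torsion} to torsion: after seminormalizing, the monoid decomposes face-by-face with \emph{varying} torsion subgroups $T_F$ (Lemma \ref{seminormal-torsion}), which leads to monoids of the form $M\leftthreetimes(\ZZ_{n_1}\times\cdots\times\ZZ_{n_p})$ and an induction on $p$ using pull-back squares whose corners involve non-monomial subalgebras of $R[t]$ such as $R[t^{n_p},t^{n_p+1}-t,\ldots]$ (Lemma \ref{non-monomial}).

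Second, the two places where you defer the "core technical work" are precisely where the paper's new ideas live, and your framework points in a direction that cannot supply them. (i) The paper never passes to the normalization through a conductor square, exactly because of the dimension obstruction you acknowledge (the quotient rings have dimension $\approx d+\rank M$, and Milnor patching needs transitivity over them, which would force $n$ to grow with $\rank M$). Instead it invokes invariance of unimodular-row orbits under \emph{subintegral} extensions (Theorem \ref{subintegral}, itself the main result of a separate paper) to pass to the seminormalization, and then a face filtration (the diagrams $(\mathbb D_i)$ in Lemma \ref{reduction}) to reduce to \emph{interior} submonoids $N_*$ of affine positive normal monoids -- not to normal monoids themselves; the interior monoids are what make the Karoubi squares of Lemma \ref{pyramidal-karoubi} available later. (ii) For the polyhedral induction, the actual engine is Theorem \ref{descentlocal}: lift the row along a surjection $R[\ZZ_+^r]\to R[M(\Delta_1)]$ from a covering polynomial ring, realize the lift as the first row of some $\epsilon\in\E_n(R[\ZZ_+^r])$ by Suslin, and hit it with the endomorphisms $\tau(t_j)=t_j+t_1^{c_j}$, using the key observation that $\epsilon^{-1}\tau(\epsilon)$ lands in $\GL_n$ of a $t_1$-tilted subalgebra for $c_1,\ldots,c_r\gg0$; combined with the quasi-monic machinery of Section \ref{Quasimonic} and pyramidal degrees, this produces a monic last coordinate, and integral-extension/semilocal arguments finish locally, with Karoubi squares and shrinking admissible polytope sequences (Lemma \ref{homothety}) plus the rank-and-complexity induction finishing globally. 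This lifting-to-polynomial-rings trick is the paper's answer to the non-existence of enough endomorphisms of monoid rings; your sketch of "pulling off free summands / stellar filtrations / excising faces" contains no substitute for it, so the induction you describe has no mechanism to actually descend.
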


Observe that the class of monoids in Theorem \ref{main} is exactly the class of submonoids of arbitrary abelian groups.

The starting point is Suslin's result in \cite{Suslin-Linear} that, for $(R,d)$ as above and arbitrary $r\in\NN$, the elementary action on the set of unimodular $n$-rows over the polynomial ring $R[t_1,\ldots,t_r]$ is transitive whenever $n\ge\max(d+2,3)$. The polynomial ring $R[t_1,\ldots,t_r]$ is the monoid ring, corresponding to a free commutative monoid of rank $r$. Theorem \ref{main} completes the project, initiated in \cite{Elrows,Elrows2}, where the transitivity was shown for a restricted class of monoids. Namely, in the torsion free case, Theorem \ref{main} extends the class of monoids in \cite{Elrows,Elrows2} in the same way as the general convex polytopes extend the class of \emph{stacked} polytopes \cite{Stacked} (called the `polytopes of simplicial growth\rq{} in \cite{Elrows2}); see Section \ref{Phi} for the relationship between monoids and polytopes. The class of stacked polytopes is negligibly small within general convex polytopes. But also, monoids with torsion have not been considered before.

We now describe consequences and research directions, suggested by Theorem \ref{main}.

Theorem \ref{main} is new already when $R$ is a field, in which case it implies that $K_1(R[M])$ is described by \emph{Mennicke symbols} \cite[Ch. 6]{Bass}. One would like to know what properties of $K_1(R[M])$ can be inferred from this fact -- as the works \cite{Gubel-Nontrivial,India} show, $K_1(R[M])$ exhibits many interesting phenomena.

Using the \emph{Quillen induction} for projective modules \cite[Ch. V.3]{Lam}, one easily deduces from Theorem \ref{main} that, for $R$ and $M$ as in the statement and $M$ torsion free without nontrivial units, every finitely generated projective $R[M]$-module of rank greater than $d$, which is stably extended from $R$, is in fact extended from $R$. For regular $R$ and seminormal $M$ this is shown in \cite[Corolary 1.4]{Swan}, and for a Dedekind ring $R$ and a torsion free monoid $M$ this follows from \cite[Theorem 1.5]{Swan}. It is very likely that the techniques of \cite{Lindel,Rao} can be combined with the proof of Theorem \ref{main} to yield the full blown $K_0$-part of \cite[Conjecture 2.4]{HHA}, claiming that finitely generated projective $R[M]$-modules of rank $\ge\max(d+1,2)$ are cancellative and split off free summands. This would extend \cite{Lindel,Rao} in the same way as Theorem \ref{main} extends Suslin's mentioned result. It is also worth mentioning that a new proof of Anderson\rq{}s conjecture \cite{Anderson} can be derived from Theorem \ref{main}, in combination with \cite{K2}, following the outline in \cite[Exercises 8.7, 8.8, 9.4]{Kripo}. Ideologically, this is the same approach, though.

Theorem \ref{main} is proved by developing a unimodular row version of the polytopal/algebraic framework, which we call the \emph{pyramidal descent}. Three versions of this techniques were developed for projective modules \cite{Anderson}, classical $K$-groups \cite{K1Gubel,K2}, and higher stable $K$-groups \cite{Gubel-Nil}. However, putting the elementary rows in the same framework has resisted the attempts so far -- only initial steps were accomplished in \cite{Elrows,Elrows2}. The main obstruction for proving the general transitivity of the elementary action had been the non-existence of special non-monomial endomorphisms in monoid rings for producing monoid ring counterparts of multivariate monic polynomials. Here we find a new approach, allowing to circumvent this difficulty by suitably lifting critical steps to covering polynomial rings, where there is a ubiquity of endomorphisms, and even drop the torsion freeness assumption for monoids. The unimodular row version of the pyramidal descent is different from the previous versions in that no lifting to polynomial rings was used before. More importanty, it also suggests a possible approach to \cite[Conjecture 2.4]{HHA} on stabilizations of all higher $K$-groups of $R[M]$: one could try to show, based on \cite{Suslin-Stability,vdKallen}, that the mentioned stabilizations are no worse than those for $R[t_1,\ldots,t_r]$, $r\in\NN$. This would reduce the general case to the free commutative monoids. At present, the appropriate techniques for polynomial rings only exists for projective modules (unstable $K_0$) \cite{Lindel,Rao}, invertible matrices (unstable $K_1$) \cite{Suslin-Linear}, and unstable Milnor groups \cite{Tulen}. Of course, Theorem \ref{main} already implies the conjectured surjective $K_1$-stabilization.

Finally, by combining the techniques in Section \ref{reduction} and \cite[\S15]{Swan}, one easily extends Theorem \ref{main} to rings of the form $R[M]/I$, where $I$ is generated by a subset of $M$. Also, if $R$ is a field, the proof of Theorem \ref{main} can be converted, along the lines in \cite{Algorithm}, into an algorithm that,  for a given unimodular row over $R[M]$, finds elementary transformations, leading to $(1,0,\ldots,0)$.

\medskip A word on the organization of the paper: in Sections \ref{Preliminaries}--\ref{Reduction} we overview previous results and make series of reductions in the general case; Section \ref{Quasimonic} allows involvement of the general coefficient rings, without which Section \ref{Pyramidal-descent} would lead to the special case of Theorem \ref{main} when $R$ is field; in Section \ref{Torsion} we explain how to involve the monoids with torsion.

\medskip\noindent\emph{Notation:} $\NN=\{1,2,\ldots\}$ and $\gg$ is for `sufficiently larger than'.

\medskip\noindent\emph{Acknowledgment.} I am very grateful to the referee for (i) detecting a mistake in the concluding inductive step at the end of Section \ref{Global-pyramidal} and suggesting a correction (see Remark \ref{Old-complexity}), and (ii) suggesting the current form of Lemma \ref{avoidance}.

\section{Unimodular rows, monoids, polytopes}\label{Preliminaries}

\subsection{Unimodular rows}\label{Unimodular-rows} All information we need on unimodular rows (except the case $l>0$ of Theorem \ref{Suslin-main} below), including a detailed exposition of \cite{Suslin-Linear}, is in \cite{Lam}.

All our rings are commutative and with unit.  The \emph{elementary subgroup} $\E_n(A)\subset\GL_n(A)$ of the general linear group over $A$ is generated by the elementary $n\times n$-matrices, i.e., the matrices which differ from the identity matrix in at most one nonzero off-diagonal entry. The following two statements are, respectively, \cite[Corollary 1.4]{Suslin-Linear} and \cite[Theorem 7.2]{Suslin-Linear}:

\begin{lemma}\label{Normal}
For any ring $A$ and $n\ge3$, $\E_n(A)$ is normal in $\GL_n(A)$. 
\end{lemma}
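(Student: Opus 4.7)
The plan is to reduce the normality claim to a well-known lemma of Suslin on rank-one perturbation matrices. Since $\E_n(A)$ is generated by the elementary matrices $e_{ij}(a)=I_n+aE_{ij}$ with $i\ne j$, and since conjugation by any fixed $g\in\GL_n(A)$ is a group automorphism, it suffices to show that $g\,e_{ij}(a)\,g^{-1}\in\E_n(A)$ for every such generator. A direct calculation gives $g\,e_{ij}(a)\,g^{-1}=I_n+a\,cr$, where $c$ is the $i$-th column of $g$ and $r$ is the $j$-th row of $g^{-1}$; the scalar $rc$ equals the $(j,i)$-entry of $g^{-1}g=I_n$, which vanishes since $i\ne j$. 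Everything thus rests on the auxiliary claim that for $n\ge 3$ and any column $c\in A^n$, row $r\in A^n$ with $rc=0$, the matrix $I_n+cr$ lies in $\E_n(A)$.

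To prove this auxiliary claim I would argue as follows. Write $c=(c_1,\ldots,c_n)^T$ and $r=(r_1,\ldots,r_n)$; the relation $rc=0$ reads $\sum_i c_ir_i=0$, so the diagonal of the rank-one matrix $cr$ satisfies a linear syzygy. Using the Steinberg commutator identity $e_{ij}(xy)=[e_{ik}(x),e_{kj}(y)]$ --- valid in $\E_n(A)$ whenever the three indices $i,j,k$ are distinct --- one can reroute any off-diagonal entry $c_ir_j$ through a ``third index'' $k$, which the assumption $n\ge 3$ provides. Applying this in a fixed order to the off-diagonal entries of $cr$, and using the syzygy to absorb the diagonal entries into the off-diagonal ones, yields an explicit elementary factorization of $I_n+cr$.

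The main obstacle is precisely this last step. The naive approach --- pick an entry $c_i$ or $r_j$ that happens to be a unit and then clean out the rest --- fails because the assumption $rc=0$ does not force any such unit to exist ($c$ need not even be unimodular). One must therefore produce the factorization uniformly in $c$ and $r$, which is exactly the role played by the commutator identity together with the third index. When $n=2$ no such third index is available; correspondingly, $\E_2(A)$ is not in general normal in $\GL_2(A)$, so the hypothesis $n\ge 3$ is sharp.
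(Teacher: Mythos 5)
The paper itself does not prove this lemma: it is quoted directly as \cite[Corollary 1.4]{Suslin-Linear}. Your opening reduction --- conjugation is a group automorphism, so it suffices to treat $g\,e_{ij}(a)\,g^{-1}=I_n+a\,cr$, where $c$ is the $i$-th column of $g$, $r$ is the $j$-th row of $g^{-1}$, and $rc=0$ --- is exactly how Suslin's argument begins, and that part is correct. The gap is in your auxiliary claim. You assert that for $n\ge3$ one has $I_n+cr\in\E_n(A)$ for \emph{arbitrary} column $c$ and row $r$ with $rc=0$, and you explicitly emphasize that $c$ need not be unimodular. That general statement is not what Suslin proves, it does not follow from the Steinberg identity $e_{ij}(xy)=[e_{ik}(x),e_{kj}(y)]$, and it is not established (or even clearly true) in that generality; if such a formal commutator argument existed, it would bypass the genuinely hard computation in \cite{Suslin-Linear}. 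Concretely, your sketch never explains how the diagonal entries $c_ir_i$ of $cr$ (which need not vanish individually --- only their sum does) are to be produced: commutators of elementary matrices directly yield only off-diagonal perturbations, and ``using the syzygy to absorb the diagonal entries'' is precisely the step for which no uniform recipe is given.

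What saves the argument --- and what Suslin actually uses --- is that in your reduction $c$ is a column of the invertible matrix $g$, hence unimodular: there is a row $p$ with $pc=1$. Unimodularity enters in two ways. First, the set $W=\{w\ :\ wc=0,\ I_n+cw\in\E_n(A)\}$ is closed under addition, since $(I_n+cw)(I_n+cw')=I_n+c(w+w')$ whenever $wc=0$. Second --- and this is where $pc=1$ is indispensable --- every row $r$ with $rc=0$ is an $A$-linear combination of the Koszul syzygies $c_ie_j^{t}-c_je_i^{t}$, $i\ne j$, via $r=\sum_{i,j}p_ir_j\big(c_ie_j^{t}-c_je_i^{t}\big)$. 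For non-unimodular $c$ this generation fails: already for $c=(x,-y,0)^{t}$ over $k[x,y]$, the syzygy $e_3^{t}$ is not in the Koszul span, so your ``uniform'' factorization has nothing to start from. One is then reduced to showing $I_n+ac\big(c_ie_j^{t}-c_je_i^{t}\big)\in\E_n(A)$, a statement that does hold for all $c$, and it is only here that the third index and the hypothesis $n\ge3$ enter, through an explicit matrix computation rather than a purely formal commutator manipulation. So your overall architecture matches the cited proof, but as written it rests on an unproved strengthening of Suslin's lemma; restoring the unimodularity hypothesis --- which your own reduction supplies for free --- and carrying out the three steps above is what is actually required.
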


\begin{theorem}\label{Suslin-main}
Assume $R$ is a Noetherian ring of dimension $d$ and $k,l,n\in\NN$ with $n\ge\max(d+2,3)$. Then the elementary action on unimodular $n$-rows over the Laurant polynomial ring $R[t_1,\ldots,t_k,s_1^{\pm1},\ldots,s_l^{\pm1}]$ is transitive.
\end{theorem}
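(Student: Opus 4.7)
The plan is to argue by induction on $l$, with the base case $l=0$ being Suslin's classical polynomial theorem, \cite[Theorem 7.2]{Suslin-Linear}. For the induction step, fix $l\ge 1$, set $A=R[t_1,\ldots,t_k,s_1^{\pm 1},\ldots,s_{l-1}^{\pm 1}]$, and let $s=s_l$, so that the ambient ring is $B=A[s,s^{-1}]$. Given $\ba\in\Um_n(B)$, the first move is to specialize along the retraction $B\twoheadrightarrow A$, $s\mapsto 1$, producing $\ba(1)\in\Um_n(A)$. By the inductive hypothesis this row is elementarily equivalent to $(1,0,\ldots,0)$; lifting the witnessing matrix from $\E_n(A)$ to $\E_n(B)$ coefficientwise via $A\hookrightarrow B$ and applying it to $\ba$, I reduce to the normalized case
$$\ba=\bigl(1+(s-1)\alpha_1,\ (s-1)\alpha_2,\ \ldots,\ (s-1)\alpha_n\bigr)$$
for some $\alpha_i\in B$.

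The core step is to elementarily reduce such a normalized row to $(1,0,\ldots,0)$ within $\E_n(B)$. My strategy parallels Suslin's polynomial argument. First, apply prime avoidance over $B$ to adjust $\ba$ so that, up to a monomial unit $s^m$, the first entry becomes a monic polynomial $f(s)\in A[s]$. Second, invoke the Whitehead-type observation (a consequence of Lemma~\ref{Normal}) that $\operatorname{diag}(u,u^{-1},1,\ldots,1)\in\E_n(B)$ for every unit $u\in B$ whenever $n\ge 3$, applied with $u=s^m$ to the pair $(a_1,a_2)$, in order to absorb the monomial factor and render $a_1$ a genuine monic polynomial in $s$. Third, exploit the fact that division with remainder by a monic polynomial in $A[s]$ is well-defined inside $B$, so that Suslin's residue and patching manipulations from \cite[\S6--\S7]{Suslin-Linear} transfer to the Laurent setting almost verbatim.

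The main obstacle is the monic-normalization step, since $B$ has no intrinsic ``degree in $s$'' to exploit --- every monomial $s^m$ is a unit, and so the usual leading-coefficient bookkeeping of \cite{Suslin-Linear} does not apply directly. The crucial point that unlocks the argument is the prior normalization $\ba(1)=(1,0,\ldots,0)$, which pins down a distinguished base point on the $s$-line and anchors the prime-avoidance chain of $A[s]\subset B$ in such a way that a monic representative of $a_1$ in $s$ can be produced by elementary moves. The most technically delicate part will be maintaining the height-chain conditions required for prime avoidance while the height-one prime $(s)\subset B$ is being traversed, and verifying that the dimension bound $n\ge\max(d+2,3)$ (which depends only on $d=\dim R$, not on $\dim B$) suffices throughout the iteration.
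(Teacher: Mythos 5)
First, a point of reference: the paper does not prove Theorem \ref{Suslin-main} at all --- it is quoted as \cite[Theorem 7.2]{Suslin-Linear}, and Section \ref{Unimodular-rows} notes that the case $l>0$ is not even covered by \cite{Lam}. So your attempt must be judged against Suslin's original argument, and it has a genuine gap, sitting exactly at the step you defer as ``technically delicate'': the monic normalization. You propose to use prime avoidance over $B=A[s,s^{-1}]$ to make the first entry, up to a unit $s^m$, monic in $A[s]$. The only general mechanism that prime avoidance offers for this is the leading-coefficient lemma (Lemma \ref{Lam-height}(b)): an ideal $J\subset A[s]$ is guaranteed to contain a monic polynomial once $\height_{A[s]}J\ge\dim A+1$, which in your inductive step means height at least $d+k+l$. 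But the monic element must lie in the ideal generated by at most $n-1$ of the entries (at least one entry must be kept aside to receive it), and by Krull's height theorem an ideal generated by $n-1$ elements has height at most $n-1$, which in the critical case $n=\max(d+2,3)$ is $\max(d+1,2)$. So for $k+l\ge2$ --- i.e., in every inductive step beyond the single ring $R[s^{\pm1}]$ --- the hypothesis of the leading-coefficient mechanism can \emph{never} be met, no matter how the avoidance chain is organized. This is not a loose end: the fact that the bound $n\ge\max(d+2,3)$ involves only $\dim R$ and not $\dim B$ is the entire content of the theorem. The same mismatch undercuts your next step as well: invoking ``Suslin's residue and patching manipulations'' over the coefficient ring $A$ (his monic polynomial theorem) as a black box requires $n\ge\dim A+2$, and your inductive hypothesis (transitivity over $A$) is not a substitute for that theorem.

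What is actually needed --- both in \cite{Suslin-Linear} and in this paper's own adaptation of the same circle of ideas --- and what is absent from your outline, is: (i) a Quillen--Suslin local-global reduction (Proposition \ref{Quillen}) to make $R$ local \emph{before} any height arguments; (ii) leading-coefficient arguments iterated through \emph{all} variables down to $R$ itself, which, since $\dim R=d<d+1$, do succeed, but produce only a \emph{quasi-monic} element (unit lexicographic leading coefficient), not an element monic in the distinguished variable --- compare Lemma \ref{pre-quasimonic} and Corollary \ref{quasimonic}; (iii) a change of variables converting quasi-monic into monic: Nagata substitutions $t_j\mapsto t_j+t_1^{c_j}$ for polynomial variables and monomial substitutions such as $s_j\mapsto s_jt_1^{c_j}$ for Laurent variables (additive substitutions in a Laurent variable do not even define endomorphisms of $B$); and (iv) the observation that over a local base the quotient of the ambient ring by a monic element is module-finite over a local ring, hence semilocal, so the row reduces there and the reduction lifts by Proposition 5.6 and Corollary 7.5 in \cite[Ch.~I]{Lam} --- compare the end of the proof of Theorem \ref{descentlocal}. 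Two smaller slips: $(s)$ is not a prime ideal of $B$, since $s\in U(B)$; and the normalization $\ba\equiv\be\bmod(s-1)$, while correct and standard, contributes no height and cannot ``anchor'' the avoidance chain in the way you hope.
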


For a ring $A$, the set of unimodular rows of length $n$ will be denoted by $\Um_n(A)$. The group $\E_n(A)$ acts on $\Um_n(A)$ by multiplication on the right. More precisely, the right multiplication of an elementary matrix $I_n+aE_{ij}$ corresponds to adding the $a$-multiple of the $i$-th component to the $j$-th component.

For two elements $\bf,\bg\in\Um_n(A)$ and a subring $S\subset A$, we write $\bf\underset{S}\sim\bg$ if $\bf$ and $\bg$ are in a same orbit of the $\E_n(S)$-action. The standard row $(1,0,\ldots,0)$ will be denoted by $\be$. In particular, $\bf\underset{S}\sim\be$ means that there a matrix $\epsilon\in \E_n(S)$, whose first row equals $\bf$.

In order to avoid confusion between rows and ideals, the ideal in $A$, generated by elements $a_1,\ldots,a_n$, will be denoted by $Aa_1+\cdots+Aa_n$.

For an ideal $I$ in a ring $A$, its \emph{height} $\height I=\height_AI$ is the minimal height of a prime ideal over $I$. The height of the unit ideal is set to be $\infty$. For an ideal $J\subset A[t]$, the ideal of leading coefficients of elements in $J$ will be denoted by $L(I)$.

\begin{lemma}\label{Lam-height}
Let $A$ be a Noetherian ring.
\begin{enumerate}[\rm(a)]
\item For any element $\ba\in\Um_n(A)$, there exists $\bb=(b_1,\ldots,b_n)\in\Um_n(A)$, such that $\ba\underset{A}\sim\bb$ and
$\height\big(Ab_1+\cdots+Ab_i\big)\ge i$ for all $i$. 
\item For an ideal $J\subset A[t]$, we have $\height_AL(J)\ge\height_{A[t]}J$.
\end{enumerate}
\end{lemma}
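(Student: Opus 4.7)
For (a), I would construct $\bb$ one entry at a time via iterated prime avoidance. Assume inductively that $b_1,\ldots,b_i$ satisfy $\height(Ab_1+\cdots+Ab_j)\ge j$ for $j\le i$, and let $P_1,\ldots,P_s$ be the (finitely many, by Noetherianness) minimal primes of $Ab_1+\cdots+Ab_i$ of height exactly $i$. Since $b_1,\ldots,b_i\in P_k$ while $\bb$ is unimodular, the ideal $Ab_{i+1}+\cdots+Ab_n$ is contained in no $P_k$; the coset form of prime avoidance then yields $c_{i+2},\ldots,c_n\in A$ with $b_{i+1}':=b_{i+1}+c_{i+2}b_{i+2}+\cdots+c_nb_n$ outside every $P_k$, and this modification is a composition of elementary transformations. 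Any minimal prime $Q$ of the enlarged ideal $Ab_1+\cdots+Ab_i+Ab_{i+1}'$ contains some minimal prime of the previous ideal: if that earlier prime already had height $>i$, then $\height Q\ge i+1$ at once, and if it was one of the $P_k$, then $b_{i+1}'\in Q\setminus P_k$ forces strict containment, again yielding $\height Q\ge i+1$. Iterating to $i=n$ produces $\bb$.

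For (b), I would first localize. For any prime $P\subset A$ containing $L(J)$, one has $L(J\cdot A_P[t])=L(J)\cdot A_P\subset PA_P$, and primes of $A[t]$ whose contraction lies in $P$ biject with primes of $A_P[t]$, heights preserved; hence $\height_{A[t]}J\le\height_{A_P[t]}(J\cdot A_P[t])$. Choosing $P$ to realize $\height_A L(J)$, the task reduces to the local statement: if $A$ is Noetherian local of dimension $d$ with maximal ideal $\mathfrak m$ and $L(J)\subset\mathfrak m$, then $\height_{A[t]}J\le d$.

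The local statement I would establish by contradiction. Suppose $\height J=d+1$. Since $\dim A[t]=d+1$, every minimal prime $Q$ of $J$ satisfies $\height Q=d+1$, which forces $Q\cap A=\mathfrak m$ and $Q\supsetneq\mathfrak m[t]$. Hence $Q/\mathfrak m[t]$ is a height-one prime of the principal ideal domain $k[t]:=(A/\mathfrak m)[t]$, generated by an irreducible polynomial; lifting it to a monic $g_Q\in A[t]$ places a monic element inside $Q$. Noetherianness gives only finitely many such primes $Q_1,\ldots,Q_s$, so the product $g_1\cdots g_s$ is monic and lies in $\bigcap_iQ_i=\sqrt J$; some power then lies in $J$ while remaining monic, which forces $1\in L(J)$ and contradicts $L(J)\subset\mathfrak m$. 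The key subtle step is the structural description of top-height primes of $A[t]$ as containing a monic polynomial; once that is in place, the product-of-monics trick closes (b) in a single stroke.
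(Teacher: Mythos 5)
Your proposal is correct (with the caveats below), and it supplies what the paper leaves out: the paper's own ``proof'' of Lemma \ref{Lam-height} is simply a citation of Lemmas 3.4 and 3.2 of \cite[Ch. III]{Lam}. For part (a) you are in fact following the same route as the cited source: the coset form of prime avoidance you invoke is exactly what the paper records as Lemma \ref{coavoidance} (in the case $y\in U(B)$), and the paper explicitly notes that this is the tool used in the proof of Lemma \ref{Lam-height}(a); your induction on the index, avoiding the minimal primes of the ideal generated by the first $i$ entries, is the standard argument. For part (b) you give a complete, self-contained proof --- localize at a minimal prime of $L(J)$ realizing its height, then show that in the local case a proper ideal of $A[t]$ all of whose minimal primes have height $d+1$ must contain a monic polynomial, forcing $1\in L(J)$ --- and this is a legitimate argument whether or not it coincides step-by-step with Lam's.

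Two caveats. The substantive one is a height convention that your proof silently decides: you use the usual convention that $\height I$ is the \emph{minimum} of the heights of the primes containing $I$, whereas the paper defines $\height I$ as the \emph{maximal} height of a minimal prime over $I$. Under the paper's literal definition, part (b) is false: for $A=k[x,y]$ and $J=A[t]xy+A[t]xt\subset A[t]$, which is the intersection of the primes $A[t]x$ and $A[t]y+A[t]t$, one has $L(J)=Ax$ of height $1$, while $J$ has the minimal prime $A[t]y+A[t]t$ of height $2$. Correspondingly, three steps of your argument need the minimum convention: the localization inequality $\height_{A[t]}J\le\height_{A_P[t]}\bigl(JA_P[t]\bigr)$ (a minimum over all minimal primes is at most the minimum over the surviving ones), the deduction that \emph{every} minimal prime of $J$ has height $d+1$ in the contradiction step of (b), and, in (a), the exhaustiveness of the dichotomy ``height exactly $i$ or height $>i$'' for the minimal primes (your induction really maintains the stronger invariant that \emph{all} minimal primes of $Ab_1+\cdots+Ab_i$ have height $\ge i$, which is what the paper's applications, e.g.\ Corollary \ref{quasimonic}, actually require, and is the version proved in Lam). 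So your proofs are correct for the statement as it is used; just state the convention explicitly. The second caveat is minor: in (a) the iteration must stop after arranging $\height(Ab_1+\cdots+Ab_{n-1})\ge n-1$. The step that would modify $b_n$ has no later entries to add and the coset-avoidance argument degenerates there; but no such step is needed, since $Ab_1+\cdots+Ab_n=A$ by unimodularity and the unit ideal has height $\infty$ by convention.
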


These are, respectively, Lemmas 3.4 and 3.2 in \cite[Ch. III]{Lam}.

\subsection{Polytopes and cones}\label{Polytopes} For generalities on polytopes and cones, we refer the reader to \cite[Ch. 1]{Kripo}. Here we only recall a few basic facts and conventions.

For a subset $X\subset\RR^r$, its \emph{convex hull} will be denoted by $\conv(X)$ and the \emph{affine hull} will be denoted by $\Aff(X)$. 

All our polytopes are assumed to be convex. 

The empty set is the polytope of dimension $-1$.
 
 A \emph{simplex} is a polytope of the form $\Delta=\conv(x_1,\ldots,x_n)$ such that $\dim\Delta=n-1$.

A \emph{pyramid} with \emph{apex} $v$ and \emph{base} $P$ means $\conv(v,P)$, where $v\notin\Aff(P)$.

The relative interior of a polytope $P$ will be denoted by $\inte(P)$. By convention, $\inte(P)=P$ when $P$ is a point.

Let $\RR_+$ denote the non-negative reals. For two polytopes $P\subset Q$, sharing a vertex $v$, we say that $Q$ is \emph{tangent} to P at $v$ if $\dim P=\dim Q$ and we have the equality of sets $\RR_+(P-v)=\RR_+(Q-v)$. 

A \emph{cone} will always refer to a rational, finite, pointed cone in $\RR^r$ for some $r$, i.e., a subset of the form $C=\RR_+z_1+\cdots+\RR_+z_n\subset\RR^r$ for some $z_1,\ldots,z_n\in\ZZ^r$, which contains no nontrivial subspace.

For a nonzero cone $C\subset\RR^r$, there is a rational affine hyperplane $\mathcal H\subset\RR^r\setminus\{0\}$, such that $C=\RR_+(C\cap\mathcal H)$ \cite[Proposition 1.21]{Kripo}. In this case $C\cap\mathcal H$ is a rational polytope of dimension $\dim C-1$; i.e., the vertices of $C\cap\mathcal H$ belong to $\QQ^r$.

\subsection{Affine monoids}\label{Monoids} A detailed information on the monoids of interest can be found in \cite[Ch. 2]{Kripo}. Below we give a quick review.

All our monoids are commutative, cancellative, and with unit. 

The maximal subgroup of a monoid $M$ will be denoted by $U(M)$. 

The \emph{group of differences} of a monoid $M$, also known as the \emph{Grothendieck group} of $M$, will be denoted by $\gp(M)$. Thus $M$ embeds into $\gp(M)$ and $\gp(-)$ is a left adjoint of the embedding of the category of monoids into that of abelian groups.

The additive monoid of nonnegative integers will be denoted by $\ZZ_+$. 

Unless specified otherwise, (i) we use additive notation for the operation in a monoid $M$ but switch to the multiplicative notation when $M$ is considered inside the monoid ring $R[M]$, and (ii) we will make the natural identifications $R[\ZZ_+]=R[t]$ and $R[\ZZ_+^r]=R[t_1,\ldots,t_r]$.

The submonoid of a monoid $M$, generated by elements $m_1,\ldots,m_n$, will be denoted by $\ZZ_+m_1+\cdots+\ZZ_+m_n$.

A monoid $M$ is \emph{torsion free} if $\gp(M)$ has no nonzero torsion. An \emph{affine monoid} is a finitely generated torsion free monoid. Every affine monoid $M$ isomorphically embeds into $\ZZ^r$, where $r=\rank(\gp(M))$. We put $\rank(M)=\rank(\gp(M)$.

If $M$ is affine and $U(M)=0$, then $M$ is said to be \emph{positive}.

\begin{lemma}\label{Gordan}
Assume $M$ is a monoid, $\gp(M)=\ZZ^r$, and $U(M)=0$.
\begin{enumerate}[\rm(a)]
\item \emph{(Gordan Lemma)} $M$ is affine if and only if $\RR_+M$ is a cone in $\RR^r$.
\item
If $M$ is affine then, for any ring  $R$, the monoid ring $R[M]$ admits a grading $R[M]=R\oplus R_1\oplus\cdots$ where the elements of $M$ are homogeneous.
\end{enumerate}
\end{lemma}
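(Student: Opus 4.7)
The plan is to handle (a) and (b) separately, with (b) following easily from the pointedness of $\RR_+M$ established in (a). For the forward direction of (a), if $M$ is generated by $m_1,\ldots,m_n$, then $\RR_+M=\RR_+m_1+\cdots+\RR_+m_n$ is a rational, finitely generated cone. For pointedness, if $v,-v\in\RR_+M$ with $v\neq 0$, clearing denominators in any nonnegative representations produces a nontrivial relation $\sum k_im_i=0$ in $\ZZ^r$ with $k_i\in\ZZ_+$; for any $j$ with $k_j>0$, the element $k_jm_j=-\sum_{i\neq j}k_im_i$ lies in $M\cap(-M)=U(M)=0$, forcing $m_j=0$. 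Iterating after discarding zero generators eventually contradicts $v\neq 0$.

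For the converse of (a), I set $C:=\RR_+M$ and apply the classical form of Gordan's lemma to the rational cone $C$ and the lattice $\ZZ^r$ to obtain generators $n_1,\ldots,n_t$ of the saturation $\overline M:=C\cap\ZZ^r$. Since each $n_i\in C=\RR_+M$, expressing $n_i$ as a rational nonnegative combination of elements of $M$ with a common denominator yields $N\in\NN$ with $N\overline M\subset M\subset\overline M$. I then suppose for contradiction that $M$ has infinitely many irreducible elements $m_1,m_2,\ldots$. Lifting each $m_j$ to a tuple in $\ZZ_+^t$ via the generating map $(a_i)\mapsto\sum a_in_i$ and applying Dickson's lemma extracts a subsequence with $m_{i_l}-m_{i_k}\in\overline M$ whenever $k<l$. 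The pigeonhole principle in the finite group $\ZZ^r/N\ZZ^r$ then yields some $k<l$ in the subsequence with $m_{i_l}-m_{i_k}\in N\ZZ^r$. Since $N\ZZ^r\cap\overline M=N\overline M$ (the nontrivial inclusion uses that $C$ is a cone: $Nw\in C$ forces $w\in C\cap\ZZ^r=\overline M$), we get $m_{i_l}-m_{i_k}\in N\overline M\subset M$, exhibiting $m_{i_l}=m_{i_k}+(m_{i_l}-m_{i_k})$ as a sum of two nonzero elements of $M$ and contradicting the irreducibility of $m_{i_l}$.

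For (b), I use the pointedness of $\RR_+M$ from (a) to choose a rational linear functional $\phi\in(\ZZ^r)^*$ in the interior of the dual cone, rescaled so that $\phi(m)\geq 1$ for every nonzero $m\in M$. Setting $R_d:=\bigoplus_{m\in M,\,\phi(m)=d}R\cdot m\subset R[M]$, the additivity of $\phi$ gives $R_d\cdot R_{d'}\subset R_{d+d'}$, and $R_0=R$ since $\phi(m)=0$ forces $m=0$, yielding the desired $\NN$-grading with elements of $M$ homogeneous. The main obstacle throughout is the passage from finite generation of $\overline M$ to that of $M$ in the converse of (a): classical Gordan only delivers the saturation, and bridging the gap requires combining Dickson's lemma with the pigeonhole argument in $\ZZ^r/N\ZZ^r$, exploiting the sandwich $N\overline M\subset M\subset\overline M$.
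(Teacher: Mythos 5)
Your proof of the forward half of (a) and of part (b) is correct, and it is necessarily independent of the paper, which gives no argument for this lemma and simply cites Proposition 2.17(f) and Corollary 2.10(a) of \cite{Kripo}. Two glosses in the forward direction deserve a word: ``clearing denominators'' implicitly uses that a rational cone containing a nonzero subspace contains a nonzero \emph{integral} vector $v$ with $-v$ in the cone, and that such a rational $v$ is then a nonnegative \emph{rational} combination of the generators; both facts are standard but should be named. Part (b) is the standard argument and is fine: an integral $\phi$ strictly positive on $\RR_+M\setminus\{0\}$ exists precisely because $\RR_+M$ is a finitely generated pointed rational cone.

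The genuine gap is in the converse of (a). Your Dickson-plus-pigeonhole argument (which is correct, and nicely exploits the sandwich $N\overline M\subset M\subset\overline M$) proves only that $M$ has \emph{finitely many irreducible elements}. That does not by itself yield finite generation: you also need that the irreducibles generate $M$, i.e.\ that every nonzero element is a finite sum of irreducibles, and you never prove this. It is not a formality: the submonoid $M\subset\ZZ^2$ generated by $(1,0)$ together with all $(-n,1)$, $n\ge0$, satisfies $U(M)=0$ and $\gp(M)=\ZZ^2$, has exactly one irreducible element, and is not finitely generated. (Of course $\RR_+M$ fails to be a cone in the paper's sense there --- which is exactly the point: the cone hypothesis must be invoked a second time, at this missing step.) The fix is short: since $C=\RR_+M$ is a finitely generated pointed cone, the integral functional $\phi$ you construct in (b) satisfies $\phi(m)\ge1$ for every nonzero $m\in M$, and induction on $\phi(m)$ shows every nonzero element of $M$ is a sum of irreducibles, because any nontrivial decomposition strictly lowers the $\phi$-value of both summands. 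Alternatively, you can bypass irreducibles entirely with the machinery you already set up: for each residue class $\rho\in\ZZ^r/N\ZZ^r$, the chosen lifts to $\ZZ_+^t$ of $M_\rho=\{m\in M\ |\ m\equiv\rho\bmod N\ZZ^r\}$ have finitely many minimal elements by Dickson, and your computation $m-m'\in\overline M\cap N\ZZ^r=N\overline M\subset M$ shows that every $m\in M_\rho$ equals such a minimal element plus an element of $N\overline M=\ZZ_+Nn_1+\cdots+\ZZ_+Nn_t$; hence those finitely many minimal elements together with $Nn_1,\ldots,Nn_t$ generate $M$.
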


These are proved in Proposition 2.17(f) and Corollary 2.10(a) in \cite{Kripo}.

An affine positive monoid has the smallest generating set -- the set of indecomposable elements in $M$. It is called the \emph{Hilbert basis} of $M$ and denoted by $\Hilb(M)$.

\subsection{Normal and seminormal monoids}\label{Normal-and-seminormal} A torsion free monoid $M$ is \emph{normal} if $x\in\gp(M)$ and $nx\in M$ for some $n\in\NN$ imply $x\in M$. The \emph{normalization} of a torsion free monoid $M$ is the smallest normal submonoid $\n(M)\subset\gp(M)$, containing $M$, i.e., $\n(M)=\{x\in\gp(M)\ |\ nx\in M\ \text{for some}\ n\in\NN\}$. If $M$ is affine then $\n(M)$ is also affine.

\begin{lemma}\label{conductor}
Let $M$ be an affine positive monoid and $\gp(M)=\ZZ^r$.
\begin{enumerate}[\rm(a)]
\item
There exists $m\in M$ such that $m+\n(M)\subset M$.
\item
$M$ is normal if and only if $M=(\RR_+M)\cap\ZZ^r$.
\item
There exists a basis $n_1,\ldots,n_r\in\ZZ^r$, such that $M\subset\ZZ_+n_1+\cdots+\ZZ_+n_r$.
\item
The relative interior $\inte(\RR_+M)$ contains a basis $\{m_1,\ldots,m_r\}\subset\ZZ^r$. 
\end{enumerate}
\end{lemma}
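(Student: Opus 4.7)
My plan is to treat the four parts in sequence using standard affine-monoid techniques. Throughout set $C:=\RR_+M$, which is a pointed rational cone in $\RR^r$ by Gordan's lemma and $U(M)=0$; the inclusions $M\subset\n(M)\subset\gp(M)=\ZZ^r$ force $\gp(\n(M))=\ZZ^r$ as well.

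For (a), the key preliminary is that $\n(M)$ is finitely generated as an $M$-module. Since $\n(M)$ is affine with generators $n_1,\dots,n_s$ and each $n_j$ satisfies $k_jn_j\in M$ for some $k_j\in\NN$, Euclidean division on each coefficient shows that $\n(M)=\bigcup_{i=1}^N(x_i+M)$ for finitely many representatives $x_i$. Using $\gp(M)=\ZZ^r$, I write $-x_i=a_i-b_i$ with $a_i,b_i\in M$, so $a_i+x_i=b_i\in M$; then $m:=\sum_i a_i\in M$ satisfies $m+x_i\in M$ for every $i$, hence $m+\n(M)\subset M$. Part (b) reduces to the standard identity $\n(M)=C\cap\ZZ^r$: the inclusion $\subset$ is obvious from $nx\in M\Rightarrow x=(nx)/n\in C$, while for the reverse any $x\in C\cap\ZZ^r$ is a $\QQ_+$-combination of elements of $M$ (by rationality of $C$), and clearing denominators puts a positive multiple in $M$.

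Parts (c) and (d) share a \emph{primitive-plus-push} construction in the relevant lattice. Both $C$ (because $\gp(M)=\ZZ^r$ cannot sit in a proper subspace) and its dual $C^\vee$ (because $C$ is pointed) are full-dimensional rational cones. The scheme is: pick a primitive lattice vector $v_1$ in the interior of the chosen cone ($C^\vee$ for (c), $C$ for (d)); extend it to a $\ZZ$-basis $v_1,w_2,\dots,w_r$; replace each $w_j$ by $w_j+Nv_1$ for $N\gg 0$. This is an elementary transformation, so the new tuple is still a $\ZZ$-basis, and for $N$ large every vector lies in the open cone. For (d) this is the basis $m_1,\dots,m_r$. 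For (c), dualizing the resulting $\ZZ$-basis of $C^\vee$ yields a $\ZZ$-basis $n_1,\dots,n_r$ of $\ZZ^r$ with $C\subset\RR_+n_1+\cdots+\RR_+n_r$; unimodularity of this simplicial cone gives $M\subset C\cap\ZZ^r\subset\ZZ_+n_1+\cdots+\ZZ_+n_r$.

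The main technical point is (c)/(d): merely inscribing an $\RR$-basis into a cone is trivial, but ensuring the basis is $\ZZ$-unimodular---so that lattice points of the simplicial cone coincide with $\ZZ_+$-combinations of basis vectors---requires the primitive-plus-push trick, which trades the rigidity of a globally chosen basis for the flexibility afforded by a single primitive interior vector.
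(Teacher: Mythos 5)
Your proposal is correct. The paper gives no proof of this lemma---it simply cites statements 2.33, 2.24, 2.17(e), and 2.74 of \cite{Kripo}---and your arguments (the conductor element via module-finiteness of $\n(M)$ over $M$ for (a), the identity $\n(M)=(\RR_+M)\cap\ZZ^r$ for (b), and the primitive-interior-vector-plus-push unimodular basis construction, applied to the dual cone $C^\vee$ for (c) and to $C=\RR_+M$ itself for (d)) are essentially the standard proofs found in that source, so the attempt matches the paper's (cited) approach.
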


These are the statements 2.33, 2.24, 2.17(e), and 2.74 in \cite{Kripo}, respectively.

A torsion free monoid $M$ is \emph{seminormal} if $x\in\gp(M)$ and $2x,3x\in M$ imply $x\in M$ or, equivalently, $x\in\gp(M)$ and $nx\in M$ for all $n\gg0$ imply $x\in M$. The \emph{seminormalization} $\sn(M)$ is the smallest seminormal monoid in $\gp(M)$, containing $M$. Explicitly, $\sn(M)=\{x\in\gp(M)\ |\ nx\in M\ \text{for all}\ n\gg0\}$. For $M$ affine, $\sn(M)$ is also affine.

\subsection{$\Phi$-correspondence}\label{Phi} To describe relations between submonoids of an affine positive monoid $M$ we will follow the following conventions:
\begin{enumerate}[\rm$\centerdot$]
\item
The group $\gp(M)$ will be thought of as $\ZZ^r$, where $r=\rank(M)$;
\item In $\RR^r$ there will be implicitly (sometimes explicitly) chosen a rational affine hyperplane $\mathcal H\subset\RR^r\setminus\{0\}$, such that $\RR_+M=\RR_+\big((\RR_+M)\cap\mathcal H\big)$;
\item
$\Phi(L):=(\RR_+L)\cap\mathcal H$ for a submonoid $L\subset M$ and $\Phi(m):=(\RR_+m)\cap\mathcal H$ for a nonzero element $m\in M$;
\item
For a convex subset $P\subset\Phi(M)$ we introduce the submonoid 
$$
M(P)=\{m\in M\setminus\{0\}\ |\ \Phi(m)\in P\}\cup\{0\}\subset M;
$$
\item If $M$ is normal and $P\subset\mathcal H$ is a convex subset then we introduce the submonoid
$$
M(P)=\{z\in\ZZ^r\setminus\{0\}\ |\ (\RR_+z)\cap\mathcal H\in P\}\cup\{0\}.
$$
(By Lemma \ref{conductor}(b), the last two notations are compatible when $P\subset\Phi(M)$.)
\end{enumerate}

Gordan Lemma can be rephrased as follows: a nonzero submonoid $L$ of an affine positive monoid $M$ is affine if and only if $\Phi(L)$ is a rational polytope.

For an affine positive normal monoid $M$, an element $m\in M$ is an \emph{extremal generator} if $\Phi(m)$ is a vertex of $\Phi(M)$ and $m$ is the generator of $(\RR_+m)\cap M\cong\ZZ_+$. Thus, $M$ has as many extremal generators as there are the vertices of $\Phi(M)$.

For an affine positive nonzero monoid $M$, we define the \emph{interior submonoid} and \emph{interior ideal} of $M$ by $M_*=M(\inte(\Phi(M))$ and $\inte(M)=M_*\setminus\{0\}$, respectively. Because of the convention $\inte(P)=P$ for $P$ a point (Section \ref{Polytopes}), we have $M_*=M$ when $\rank(M)=1$.

\begin{lemma}\label{localization-seminormal}
\begin{enumerate}[\rm(a)]
Let $M$ be an affine positive monoid of rank $r$.
\item
If $M$ is normal and $m\in M$ is an extremal generator then $\ZZ m+M=\ZZ m+M_0\cong\ZZ m\times M_0$ for an affine positive normal submonoid $M_0\subset\ZZ^r$ of rank $r-1$.
\item
$M$ is seminormal if and only if 
$M(F)_*=\n(M(F))_*$ for every face $F\subset\Phi(M)$ (including $\Phi(M)$ itself).
\end{enumerate}
\end{lemma}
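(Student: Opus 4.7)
The plan for (a) has two parts. First I would argue that the extremal generator $m$ is primitive in $\ZZ^r$: if $m=kz$ with $k\in\NN$ and $z\in\ZZ^r$, normality of $M$ (via Lemma \ref{conductor}(b)) puts $z$ in $(\RR_+m)\cap M=\ZZ_+m$, the last equality using the extremality of $m$, and this forces $k=1$. Hence $m$ extends to a $\ZZ$-basis of $\ZZ^r$, yielding a decomposition $\ZZ^r=\ZZ m\oplus L$ with $L\cong\ZZ^{r-1}$. I would then set $M_0:=\pi(M)\subset L$, where $\pi\colon\ZZ^r\to L$ is the projection killing $\ZZ m$. Since $M$ is finitely generated and $\gp(M)=\ZZ^r$, so is $M_0$, with $\gp(M_0)=L$.

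The crux of (a) is the positivity of $M_0$, since a generic projection of a pointed cone need not be pointed. What rescues us is that $\Phi(m)$ is a vertex of $\Phi(M)$: I would lift an affine supporting hyperplane at this vertex inside $\mathcal H$ to a linear functional $\psi\colon\RR^r\to\RR$ with $\psi(m)=0$ and $\psi>0$ on $(\RR_+M)\setminus\RR_+m$. Because $\psi$ vanishes on $\RR m$ it descends to $\bar\psi\colon L\otimes\RR\to\RR$, and for $x\in M$ with $\pi(x)\ne 0$ pointedness of $\RR_+M$ places $x$ off $\RR_+m$, so $\bar\psi(\pi(x))=\psi(x)>0$. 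Thus $M_0$ lies in the pointed cone $\{\bar\psi\ge 0\}$ and is positive. Normality of $M_0$ then follows from Lemma \ref{conductor}(b) once one checks $M_0=(\RR_+M_0)\cap L$: given $\ell\in L\cap\RR_+M_0$, there is $a\in\RR$ with $\ell+am\in\RR_+M$, and for any integer $k\ge a$ one has $\ell+km\in(\RR_+M)\cap\ZZ^r=M$, whence $\ell=\pi(\ell+km)\in M_0$. The identity $\ZZ m+M=\ZZ m+M_0$ is immediate from $x=am+\pi(x)$ for $x\in M$, and $\ZZ m\cap M_0\subset\ZZ m\cap L=0$ gives the direct sum.

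For (b), the inclusion $M(F)_*\subset\n(M(F))_*$ is automatic, so the content lies in $\supset$ under seminormality. My plan is a conductor argument: given $z\in\n(M(F))_*$, I would apply Lemma \ref{conductor}(a) to $M(F)$ (itself affine positive, generated by those elements of $\Hilb(M)$ that lie in $\RR_+F$) to obtain $c\in M(F)$ with $c+\n(M(F))\subset M(F)$. Because $\Phi(z)\in\inte F$, $z$ lies in the relative interior of the face-cone $\RR_+F=\RR_+M(F)$ in its linear span, so by openness $nz-c\in\RR_+M(F)\cap\gp(M(F))=\n(M(F))$ for $n\gg 0$; then $nz=c+(nz-c)\in M(F)\subset M$ eventually, and seminormality of $M$ forces $z\in M$, hence $z\in M(F)_*$. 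The reverse implication is dual: for $x\in\gp(M)$ with $nx\in M$ for $n\gg 0$, let $F$ be the unique face of $\Phi(M)$ with $\Phi(x)\in\inte F$; then $nx,(n+1)x\in M(F)$ for $n\gg 0$ yield $x\in\gp(M(F))$ and $x\in\n(M(F))$, and the hypothesis applied at $F$ gives $x\in\n(M(F))_*=M(F)_*\subset M$.

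The main obstacle I anticipate is the positivity step in (a): extremality of the ray $\RR_+m$, rather than its being just any ray in $\RR_+M$, is what forces the projection to stay pointed, and the cleanest way I see to encode this is through the supporting-hyperplane functional $\psi$. Once this is in hand, the rest of (a) is bookkeeping, and (b) reduces to the conductor argument above, which hinges on the openness of $\inte F$ inside $F$.
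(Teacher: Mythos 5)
Your proof is correct; note, however, that the paper never proves this lemma itself --- it simply cites Propositions 2.32 and 2.40 of \cite{Kripo} --- so the comparison is really with the textbook arguments. For (a), your route is a hands-on version of the standard one: in \cite{Kripo} one localizes $M$ at $m$ and invokes the fact that a normal affine monoid splits off its unit group, extremality of $m$ being what identifies $U(\ZZ m+M)$ with $\ZZ m$; you instead prove $m$ is primitive, pick a complement $L$ with $\ZZ^r=\ZZ m\oplus L$, and set $M_0=\pi(M)$, with the supporting functional $\psi$ at the extreme ray $\RR_+m$ doing exactly the work that extremality does in the abstract splitting (it both kills units of $M_0$ and survives the projection). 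Your verification that $M_0=(\RR_+M_0)\cap L$, hence normality via Lemma \ref{conductor}(b), is sound. For (b), your conductor argument in both directions parallels the proof of \cite[Proposition 2.40]{Kripo}. Two cosmetic points: the halfspace $\{\bar\psi\ge0\}$ is of course not a pointed cone, but what you actually establish --- that $\bar\psi>0$ on $M_0\setminus\{0\}$ --- is precisely what rules out nonzero units, so nothing is lost; and in (b) you use the identity $\n(N)=\RR_+N\cap\gp(N)$ for affine $N$ without comment, which deserves either a citation (\cite[Proposition 2.22]{Kripo}) or the one-line argument of clearing denominators in a rational conic representation.
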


These are, respectively, Propositions 2.32 and 2.40 in \cite{Kripo}.

\section{Pyramids and monoids}\label{Pyramidal}

\subsection{Pyramidal decomposition} Pyramidal descent is based on the following

\begin{definition}\label{pyramidal-valuation}
Let $M$ be an positive affine normal monoid and $m\in M$ be an extremal generator. A representation $M=M(\Delta)\cup M(\Gamma)$ is called a \emph{pyramidal decomposition} with \emph{vertex} $m$ if $\Delta,\Gamma\subset\Phi(M)$ are rational polytopes, such that:
\begin{enumerate}[\rm(a)]
\item $\Phi(M)=\Delta\cup\Gamma$,
\item $\Delta$ is a pyramid with base $\Delta\cap\Gamma$ and $\dim\Delta=\dim\Gamma$.
\item $\Phi(M)$  is tangent to $\Delta$ at $\Phi(m)$ (Section \ref{Polytopes}).
\end{enumerate}
\end{definition}
\noindent(Our pyramidal decompositions are called \emph{non-degenerate} in \cite{Swan}, the \emph{degenerate} ones corresponding to the case $\dim\Gamma<\dim\Delta$, i.e., when $\Gamma$ is a facet of $\Delta$.)

For a pyramidal decomposition $M=M(\Delta)\cup M(\Gamma)$, there exists a monoid homomorphism $\deg:M\to\ZZ$, such that $0\not=\deg(M(\Delta))\subset\ZZ_+$ and $\deg(M(\Gamma))\subset\ZZ_-$, where $\ZZ_-$ is the set of nonnegative integers. Such a map will be called a \emph{pyramidal degree,} associated to the given decomposition. We always have if $0\not=\deg(M(\Gamma))$.

\begin{lemma}\label{highest-degree}
Let $M$ be an affine  positive normal monoid, $M=M(\Delta)\cup M(\Gamma)$ be a pyramidal decomposition, and $\deg:M\to\ZZ$ be an associated pyramidal degree. Consider a system of linear combinations
$l_i=\sum_{j=1}^r d_{ij}n_j$, $i=1,\ldots,s$, where
\begin{enumerate}[\rm$\centerdot$]
\item $n_j\in M\setminus\{0\}$ and $d_{ij}\in\ZZ_+$ for all $i,j$,
\item $n_1$ is the vertex of the decomposition,
\item
$(d_{11},\ldots,d_{1r})>(d_{21},\ldots,d_{2r})>\ldots$ in the lexicographical order, 
\end{enumerate}
Assume $d_{ij}=a_{ij}+b_{ij}$, where $a_{ij},b_{ij}\in\ZZ_+$, $i=2,\ldots,s$ and $j=1,\ldots,r$. Then
\begin{align*}
\deg\bigg(\sum_{j=1}^r d_{1j}c_jn_1\bigg)>\deg\bigg(\sum_{j=1}^r (a_{ij}n_j+b_{ij}c_jn_1)\bigg),\quad i=2,\ldots,s,
\end{align*}
for $c_1\gg c_2\gg\ldots\gg c_r\gg0$.
\end{lemma}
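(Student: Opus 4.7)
The plan is to unpack the inequality using the additivity of $\deg$, exploit the lexicographic hypothesis together with the splitting $d_{ij}=a_{ij}+b_{ij}$ to identify a leading positive contribution, and then make that contribution dominate every other term by choosing the $c_j$'s in the prescribed hierarchy.

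First I would use that $\deg$ is a monoid homomorphism to write
\[
\deg\Bigl(\sum_{j=1}^r d_{1j}c_jn_1\Bigr)=\Bigl(\sum_{j=1}^r d_{1j}c_j\Bigr)\deg(n_1),
\]
\[
\deg\Bigl(\sum_{j=1}^r (a_{ij}n_j+b_{ij}c_jn_1)\Bigr)=\sum_{j=1}^r a_{ij}\deg(n_j)+\Bigl(\sum_{j=1}^r b_{ij}c_j\Bigr)\deg(n_1).
\]
Because the vertex $n_1$ is the apex of the pyramid $\Delta$, the pyramidal degree satisfies $\deg(n_1)>0$. Subtracting and dividing by $\deg(n_1)$, the claimed inequality reduces, for each $i=2,\ldots,s$, to
\[
\sum_{j=1}^r(d_{1j}-b_{ij})c_j>C_i,\qquad C_i:=\frac{1}{\deg(n_1)}\sum_{j=1}^r a_{ij}\deg(n_j),
\]
where the constant $C_i$ is independent of the $c_j$.

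For the lexicographic analysis, fix $i\ge 2$ and set $k_i:=\min\{j:d_{1j}>d_{ij}\}$, so that $d_{1j}=d_{ij}$ for $j<k_i$. Combined with $d_{ij}=a_{ij}+b_{ij}$, this yields $d_{1j}-b_{ij}=a_{ij}\ge 0$ for $j<k_i$ and $d_{1,k_i}-b_{i,k_i}\ge a_{i,k_i}+1\ge 1$. For $j>k_i$ the quantity $d_{1j}-b_{ij}$ may be negative, but its absolute value is bounded by the fixed constant $\max(d_{1j},b_{ij})$. Discarding the nonnegative contributions from $j<k_i$ gives
\[
\sum_{j=1}^r(d_{1j}-b_{ij})c_j\ge c_{k_i}-\sum_{j>k_i}b_{ij}c_j.
\]

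Since $\{2,\ldots,s\}$ is finite, I would finally choose the $c_j$'s inductively from $j=r$ down to $j=1$, at each step taking $c_j$ large enough that, uniformly over all $i$, it exceeds both the constants $C_i$ and $\sum_{\ell>j}b_{i\ell}c_\ell$ for the already-fixed $c_\ell$. This single choice produces, for every $i$, the dominance $c_{k_i}>\sum_{j>k_i}b_{ij}c_j+C_i$, which is the desired inequality. The main obstacle is purely bookkeeping---ensuring that a single hierarchy $c_1\gg\cdots\gg c_r$ works uniformly over all $i$---but the finiteness of $s$ together with the exact form of the hypothesis makes this routine.
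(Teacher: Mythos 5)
Your proof is correct and follows essentially the same route as the paper's: expand by additivity of $\deg$, use $\deg(n_1)>0$ at the vertex, and reduce everything to a weighted lexicographic inequality in the $c_j$, settled by choosing $c_r,c_{r-1},\ldots,c_1$ hierarchically. The only differences are bookkeeping --- the paper first assumes $c_j>\deg(n_j)$ so as to absorb the terms $a_{ij}\deg(n_j)$ into $a_{ij}c_j\deg(n_1)$ and reduce to the cleaner inequality $\sum_j d_{1j}c_j>\sum_j d_{ij}c_j$, whereas you carry them as constants $C_i$; just note that in your final step each $c_j$ must exceed the \emph{sum} $C_i+\sum_{\ell>j}b_{i\ell}c_\ell$ rather than each term separately, a trivial adjustment.
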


\begin{proof}
We can assume $c_j>\deg(n_j)$ for $j=2,\ldots,s$. Then it is enough to achieve
\begin{align*}
\deg\bigg(\sum_{j=1}^r d_{1j}c_jn_1\bigg)>\deg\bigg(\sum_{j=1}^r d_{ij}c_jn_1\bigg),\quad i=2,\ldots,s,
\end{align*}
which is equivalent to the inequalities
\begin{align*}
\sum_{j=1}^r d_{1j}c_j>\sum_{j=1}^r d_{ij}c_j,\quad i=2,\ldots,s,
\end{align*}
known to be satisfied for $c_1\gg c_2\gg\ldots\gg c_r\gg0$.
\end{proof}

\subsection{Admissible configurations}\label{Admissible}

\begin{definition}\label{admissible}
For an affine positive normal monoid $M$ of rank $r$ and an extremal generator $m\in M$, a triple $(\mathcal H,\Delta_1,\Delta_2)$ is called an \emph{admissible configuration} if $\mathcal H\subset\RR^r\setminus\{0\}$ is a rational affine hyperplane, such that $\RR_+M=\RR_+\big((\RR_+M)\cap\mathcal H\big)$, and $\Delta_1,\Delta_2\subset\mathcal H$ are rational pyramids with apex $\Phi(m)$, satisfying the conditions:
\begin{enumerate}[\rm(a)]
\item
$\Phi(M)\subset\Delta_1\subset\Delta_2$,
\item $\Delta_1$ an $\Delta_2$ are tangent to $\Phi(M)$ at $\Phi(m)$,
\item $M(\Delta_1)=\ZZ_+m+M(F_1)$, where $F_1\subset\Delta_1$ is the facet, opposite to $\Phi(m)$,
\item $M(\Delta_2)=\ZZ_+m+M(F_2)$, where $F_2\subset\Delta_2$ is the facet, opposite to $\Phi(m)$,
\item $F_2\cap\Delta_1=\emptyset$.
\end{enumerate}
\end{definition}

In the notation above, $M(F_1)$ and $M(F_2)$ are isomorphic monoids and, consequently, so are the monoids $M(\Delta_1)$ and $M(\Delta_2)$. 

\begin{lemma}\label{admissible-exists}
An admissible configurations exists for any affine positive normal monoid $M$ with $\rank(M)\ge2$ and any extremal generator $m\in M$. 
\end{lemma}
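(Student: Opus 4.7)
My approach is to use the decomposition from Lemma~\ref{localization-seminormal}(a) to set up favorable coordinates and then construct the two pyramids as zero-slices of carefully chosen primitive integer linear functionals. First, invoke Lemma~\ref{localization-seminormal}(a) to write $\ZZ m + M = \ZZ m \oplus M_0$ with $M_0$ an affine positive normal monoid of rank $r-1$, identifying $\gp(M) = \ZZ m \oplus L$ for $L = \gp(M_0)$. Next, choose a primitive integer linear functional $\ell \in (\ZZ^r)^*$ with $\ell(m) = 1$ and $\ell > 0$ on $\RR_+ M \setminus \{0\}$; this exists because the affine hyperplane $\{\ell(m)=1\}$ meets the open interior of the dual cone $(\RR_+M)^*$ in a nonempty relatively open set, and integer lattice points are dense there since $m$ is primitive. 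Setting $\mathcal H = \{\ell = 1\}$ makes $\Phi(m) = m$ a lattice point of $\mathcal H$.

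To build $\Delta_1$, I pick a primitive integer functional $\psi_1 \in (\ZZ^r)^*$ satisfying $\psi_1(m) = 1$, $\psi_1 \ge 0$ on $\RR_+M$, and $\psi_1 \le \ell$ on $\RR_+M$; equivalently, $\ell - \psi_1$ is a primitive functional lying in the facet of $(\RR_+M)^*$ cut out by $m^\perp$. Let $F_1$ be the image of $\Phi(M) \setminus \{m\}$ under the projection from $m$ onto the affine subspace $\mathcal H \cap \{\psi_1 = 0\}$; this is a rational polytope of dimension $r-2$. Set $\Delta_1 = \conv(\Phi(m), F_1)$. By construction $\Phi(M) \subset \Delta_1$ and the corner cone of $\Delta_1$ at $\Phi(m)$ equals the tangent cone of $\Phi(M)$ at $m$, giving conditions (a) and (b). For (c), note that $\RR_+\Delta_1 = \RR_+ m + \RR_+ F_1$ with $\RR_+ F_1$ the facet defined by $\psi_1 = 0$; since $\psi_1(m) = 1$ with $\psi_1$ primitive, any $z \in M(\Delta_1) = \RR_+\Delta_1 \cap \ZZ^r$ decomposes uniquely as $z = \alpha m + w$ with $\alpha \ge 0$ and $w \in \RR_+F_1$, and evaluating $\psi_1$ forces $\alpha = \psi_1(z) \in \ZZ_+$, so $w = z - \psi_1(z) m \in \RR_+F_1 \cap \ZZ^r = M(F_1)$.

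For $\Delta_2$, choose a second primitive integer $\psi_2$ with $\psi_2(m)=1$, $\psi_2 > 0$ strictly on $\RR_+M \setminus \{0\}$, $\psi_2 \le \ell$ on $\RR_+M$, and a compatibility ensuring $\Delta_1 \subset \Delta_2$ (e.g.\ a small positive perturbation of $\psi_1$ that strictly separates the boundary facets of $\RR_+M$ from $\{\psi_2=0\}$). Define $F_2$ and $\Delta_2$ analogously via the slice $\mathcal H \cap \{\psi_2 = 0\}$. Then condition (d) holds by the same lattice argument as (c), and condition (e) is immediate: since $\psi_2 > 0$ strictly on $\RR_+M \setminus \{0\}$, we have $\Phi(M) \cap \{\psi_2=0\} = \emptyset$, hence $F_2 \cap \Phi(M) = \emptyset$. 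The main obstacle will be the simultaneous realization of all the primitivity and inequality conditions on $\psi_1, \psi_2$ together with the nested inclusion $\Delta_1 \subset \Delta_2$; the existence arguments are density statements in the relevant open slices of the dual cone and its $m$-facet, requiring a careful rational approximation argument and, in low rank, a more delicate perturbation.
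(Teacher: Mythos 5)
Your construction is genuinely different from the paper's, and its core is sound. The paper also starts from Lemma~\ref{localization-seminormal}(a), but then uses Lemma~\ref{conductor}(c) to envelope $M_0$ in a free monoid $\ZZ_+n_1+\cdots+\ZZ_+n_{r-1}$ and takes for $\Delta_1,\Delta_2$ explicit \emph{simplices}: $\mathcal H=\Aff(m,n_1-(k+1)m,\ldots,n_{r-1}-(k+1)m)$, $\Delta_1$ the slice of the simplicial cone $\RR_+m+\sum_j\RR_+(n_j-km)$, and $\Delta_2=\conv(m,n_1-(k+1)m,\ldots,n_{r-1}-(k+1)m)$, with $k\gg0$; conditions (c),(d) of Definition~\ref{admissible} are then immediate because $\{m,n_j-km\}$ and $\{m,n_j-(k+1)m\}$ are $\ZZ$-bases of $\ZZ^r$, so the $M(\Delta_i)$ are free. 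You instead slice the corner cone $K$ of $\Phi(M)$ at $\Phi(m)$ by integral hyperplanes $\{\psi_i=0\}$ normalized by $\psi_i(m)=1$, and your verification of (c) --- any lattice point $z$ of $\RR_+\Delta_i$ splits as $\psi_i(z)m+(z-\psi_i(z)m)$ with $\psi_i(z)\in\ZZ_+$ and $z-\psi_i(z)m\in M(F_i)$ --- is correct; it is exactly the right substitute for the basis argument. Your version even has a real advantage: since $\Delta_i=K\cap\{\psi_i\ge0\}$ and $\psi_i$ is positive at the apex, the corner cone of $\Delta_i$ at $\Phi(m)$ \emph{equals} $K$, so (b) holds in the literal sense of Section~\ref{Polytopes} (equality of corner cones, which is also what the $t_1$-tiltedness argument in Theorem~\ref{descentlocal} exploits); the paper's simplices have simplicial corner cones that contain $K$ but cannot equal it unless $K$ is itself simplicial.

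That said, what you defer as ``the main obstacle'' does hide genuine gaps, and one stated justification is false. Lattice points are never dense in an open set, so density cannot produce integral $\ell,\psi_1,\psi_2$ with the exact normalization at $m$; rescaling a nearby rational functional destroys that normalization. Next, $\psi_1\le\ell$ on $\RR_+M$ is insufficient: if $\psi_1(p)=\ell(p)$ for some $p\in\Phi(M)\setminus\{m\}$, the ray from $m$ through $p$ never meets $\{\psi_1=0\}$, and your projection $F_1$ is undefined (equivalently, the slice is unbounded); you need $\ell-\psi_1>0$ on $\RR_+M\setminus\RR_+m$. Finally, the nesting $\Delta_1\subset\Delta_2$ is not a generic ``small perturbation'' fact: writing points of $K$ as $m+d$ with $d\in\RR_+(\Phi(M)-m)$, one sees that $\Delta_1\subset\Delta_2$ amounts to $\psi_2-\psi_1\ge0$ on $\RR_+(\Phi(M)-m)$, i.e.\ $\psi_2$ must \emph{dominate} $\psi_1$ on the cone, with the difference vanishing at $m$.

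All three points are repaired by one choice, uniformly in rank (your worry about low rank is unfounded, and primitivity of the functionals is irrelevant --- only integrality and the value $1$ at $m$ matter). Since $\RR_+m$ is an extremal ray of the pointed $r$-dimensional rational cone $\RR_+M$, the dual face $(\RR_+M)^*\cap m^\perp$ is an $(r-1)$-dimensional rational cone; choose an integral $\eta$ in its relative interior, so $\eta(m)=0$, $\eta\ge0$ on $\RR_+M$, and $\eta>0$ on $\RR_+M\setminus\RR_+m$. Choose any integral $\psi_0$ with $\psi_0(m)=1$ (possible since $m$, being an extremal generator of a normal monoid, is primitive), and set $\psi_1=\psi_0+N\eta$ with $N\gg0$; a compactness argument on a cross-section of $\RR_+M$ gives $\psi_1>0$ on $\RR_+M\setminus\{0\}$. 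Then put $\psi_2=\psi_1+\eta$, $\ell=\psi_1+2\eta$, and $\mathcal H=\{\ell=1\}$. Now $\ell-\psi_1=2\eta$ and $\ell-\psi_2=\eta$ are positive off $\RR_+m$, so both slices $F_i=K\cap\{\psi_i=0\}$ are bounded rational polytopes; $\psi_2-\psi_1=\eta\ge0$ on $\RR_+(\Phi(M)-m)$ gives $\Delta_1\subset\Delta_2$; and $\psi_2\ge\psi_1>0$ on $\RR_+M\setminus\{0\}$ gives (e). With this supplement, your argument is complete.
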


\begin{proof}
By Lemma \ref{localization-seminormal}(a), $\ZZ m+M=\ZZ m+ M_0\cong\ZZ m\times M_0$ for an affine positive normal submonoid $M_0\subset\gp(M)$. Let $n_1,\ldots,n_{r-1}$ be a basis of $\gp(M_0)\cong\ZZ^{r-1}$, such that $M_0\subset \ZZ_+n_1+\cdots+\ZZ_+n_{r-1}$ (Lemma \ref{conductor}(c)). Then, for $k\gg0$, the following triple is an admissible configuration:
\begin{align*}
&\mathcal H=\Aff(m,n_1-(k+1)m,\ldots,n_{r-1}-(k+1)m)\subset\RR^r,\\
&\Delta_1=\big(\RR_+m+\RR_+(n_1-km)+\cdots+\RR_+(n_{r-1}-km)\big)\ \cap\ \mathcal H,\\
&\Delta_2=\conv(m,n_1-(k+1)m,\ldots,n_{r-1}-(k+1)m).
\end{align*}
\end{proof}

\section{Patching unimodular rows}\label{Patching}

We say that a commutative square of ring homomorphisms 
$$
\xymatrix{
A\ar[r]\ar[d]&A_1\ar[d]\\
A_2\ar[r]&A'
}
$$
has the \emph{Milnor patching property} for unimodular rows if for every natural number $n\ge3$ and every element $\bf\in\Um_n(A_1)$, whose image $\bf'\in\Um_n(A')$ satisfies $\bf'\underset{A'}\sim\be$, there exists $\bg\in\Um_n(A)$ with the image $\bg'\in\Um_n(A_1)$, satisfying $\bg'\underset{A_1}\sim\bf$. 

A \emph{Karoubi square} is a commutative square of rings of the following type
$$
\xymatrix{
A\ar[rr]^\rho\ar[dd]&&B\ar[dd]\\
\\
S^{-1}A\ar[rr]_{S^{-1}\rho}&&\rho(S)^{-1}B
}
$$
where $S\subset A$ is a multiplicative subset, $S$ acts regularly on $A$, $\rho(S)$ acts regularly on $B$, and the homomorphism $A/sA\to B/\rho(s)B$ is an isomorphism for every $s\in S$. A Karoubi square is always a pull-back diagram.

\begin{lemma}\label{Milnor}\emph{(Milnor Patching)} The following two types of commutative squares of ring homomorphisms have the Milnor patching property:
\begin{enumerate}[\rm(a)]
\item
$\xymatrix{
A\ar[r]\ar[d]&A_1\ar[d]^{\sigma}\\
A_2\ar[r]_{\pi}&A'
}$ a pull-back diagram, where either $\pi$ or $\sigma$ is surjective,
\item
$\xymatrix{
A\ar[rr]^\rho\ar[dd]&&B\ar[dd]\\
\\
S^{-1}A\ar[rr]_{S^{-1}\rho}&&\rho(S)^{-1}B
}$ a Karoubi square.
\end{enumerate}
\end{lemma}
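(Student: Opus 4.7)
For part (a), I would follow the classical Milnor patching template. Given $\epsilon'\in\E_n(A')$ with first row $\sigma(\bf)=\bf'$, surjectivity of $\pi$ allows me to lift $\epsilon'$ to $\epsilon_2\in\E_n(A_2)$ one elementary generator at a time: each $I+a'E_{ij}$ lifts by picking any preimage of $a'$ under $\pi$. Letting $\bh\in\Um_n(A_2)$ be the first row of $\epsilon_2$, both $\bf$ and $\bh$ map to $\bf'$ in $A'$, so the pull-back property furnishes $\bg\in A^n$ whose images in $A_1$ and $A_2$ are $\bf$ and $\bh$ respectively. To verify $\bg\in\Um_n(A)$, I would compute coefficients directly: start from unimodular witnesses for $\bf$ over $A_1$, lift them to $A$ via surjectivity of $\pi$, and correct the residual error in the $A_2$-direction using the identification $\ker(A\to A_1)\cong\ker\pi$ together with a unimodular witness for $\bh$ over $A_2$. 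Since $\bg$ maps exactly to $\bf$, the required relation $\bg'\underset{A_1}\sim\bf$ holds trivially.

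For part (b), the plan is to reduce to part (a). Since $\rho(S)^{-1}B=\varinjlim_{s\in S}B[1/\rho(s)]$ is a filtered colimit, I would pick $s\in S$ such that the trivializing matrix $\epsilon'\in\E_n(\rho(S)^{-1}B)$ descends to some $\epsilon_s\in\E_n(B[1/\rho(s)])$, and such that the equality $\bf=\be\cdot\epsilon_s$ already holds in $B[1/\rho(s)]$. Independently, the Karoubi hypothesis $A/sA\cong B/\rho(s)B$, combined with the regularity of $s$ on $A$ and of $\rho(s)$ on $B$, extends by induction on $N$ to an isomorphism $A/s^NA\cong B/\rho(s)^NB$ for every $N\ge1$; the inductive step applies the five lemma to the compatible short exact sequences $0\to A/sA\xrightarrow{\cdot s^{N-1}}A/s^NA\to A/s^{N-1}A\to 0$ and its $B$-analogue. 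Each such isomorphism promotes the square $A\to B,\ A/s^NA\to B/\rho(s)^NB$ to an honest Milnor square to which (a) applies.

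The main obstacle is the descent step: transferring the elementary equivalence $\bf\sim\be$ from the localization $B[1/\rho(s)]$, where $\rho(s)$ is invertible, down to the quotient $B/\rho(s)^NB$, where $\rho(s)$ is nilpotent. These rings sit at opposite extremes of the $\rho(s)$-adic spectrum, and the entries of $\epsilon_s$ carry genuine $\rho(s)$-denominators, so a naive reduction modulo $\rho(s)^N$ is impossible. I would overcome this via a Vaserstein-style factorization enabled by the normality of $\E_n$ in $\GL_n$ (Lemma \ref{Normal}, which requires $n\ge3$): decompose $\epsilon_s=\epsilon_B\cdot\epsilon_{\mathrm{loc}}$ with $\epsilon_B\in\E_n(B)$ and $\epsilon_{\mathrm{loc}}\in\E_n(B[1/\rho(s)])$ congruent to the identity modulo $\rho(s)^N$. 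Substituting into $\bf=\be\cdot\epsilon_s$ forces $\bf-\be\cdot\epsilon_B$ to lie in $\rho(s)^{N'}B$ for some $N'$ cofinal with $N$, so the image of $\bf$ in $B/\rho(s)^{N'}B$ is already equal, hence a fortiori elementarily equivalent, to $\be\cdot\epsilon_B$ and therefore to $\be$. Invoking part (a) on the corresponding Milnor square then produces $\bg\in\Um_n(A)$ with $\bg'\underset{B}\sim\bf$.
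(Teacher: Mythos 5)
Your part (a) is correct and is essentially the paper's own argument: the paper reduces everything to the identity $\E_n(A')=\E_n(A_1)\E_n(A_2)$, which in case (a) is exactly your observation that elementary matrices lift along a surjection, and unimodularity of the patched row is verified by the same witness-lifting/error-correcting computation you sketch. One caveat: you only treat the case where $\pi$ is surjective. When instead $\sigma$ is surjective, your lifting step is unavailable and one needs the symmetric variant: lift $(\epsilon')^{-1}$ to $\epsilon_1\in\E_n(A_1)$ and patch $\bf\cdot\epsilon_1$, whose image in $A'$ is $\be$, against $\be\in\Um_n(A_2)$. Both cases are actually used in the paper (the diagrams in Lemma \ref{reduction} have $\sigma$ surjective, while the first diagram in Section \ref{Torsion} has $\pi$ surjective), so this omission should be repaired, though it is routine.

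Part (b) contains two genuine gaps, and they are fatal as written. First, the proposed factorization $\epsilon_s=\epsilon_B\cdot\epsilon_{\mathrm{loc}}$ with $\epsilon_B\in\E_n(B)$ and $\epsilon_{\mathrm{loc}}$ congruent to the identity modulo $\rho(s)^N$ cannot exist in general: a matrix congruent to the identity modulo $\rho(s)^NB$ has all its entries in $B$, so the factorization would force every entry of $\epsilon_s$ to lie in $B$, contradicting that $\epsilon_s$ carries genuine $\rho(s)$-denominators (and if the congruence is instead read inside $B[1/\rho(s)]$, it is vacuous, since $\rho(s)$ is a unit there). The true statement of this kind is Vorst's lemma, which is precisely what the paper cites: for a Karoubi square, $\E_n(\rho(S)^{-1}B)$ is the product of the images of $\E_n(S^{-1}A)$ and $\E_n(B)$ --- the second factor lives over $S^{-1}A$, not in a congruence subgroup over $B$, and its proof requires the conjugation--dilation calculus, not merely normality of $\E_n$. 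Second, the squares you want to feed into part (a), with $A\to B$ on top and the isomorphism $A/s^NA\cong B/\rho(s)^NB$ on the bottom, are \emph{not} pull-back diagrams: because the bottom map is an isomorphism, the fibre product of $B$ and $A/s^NA$ over $B/\rho(s)^NB$ is canonically isomorphic to $B$, not to $A$ (try $A=\ZZ$, $B=\ZZ_{(3)}$, $S=\{2^k\}$). So part (a) does not apply, and your conclusion never reaches the ring $A$ at all. In the correct argument the descent to $A$ uses the pull-back property of the Karoubi square itself, gluing a row over $B$ of the form $\bf\cdot\epsilon_B^{-1}$ with a row over $S^{-1}A$, and the unimodularity of the glued row is then a genuine additional difficulty, since neither map in a Karoubi square is surjective; that is the ``sketched step'' the paper outsources to \cite[Lemma 8]{Subintegral}. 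Your filtered-colimit reduction and the five-lemma extension of $A/sA\cong B/\rho(s)B$ to all powers of $s$ are both correct, but they do not substitute for these two missing ingredients.
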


This is proved in \cite[Proposition 9.1(a)]{Elrows2}, with the details for one sketched step included in \cite[Lemma 8]{Subintegral}. The basis is the equality $\E_n(A\rq{})=\E_n(A_1)\E_n(A_2)$, obvious for the case (a) and proved in \cite[Lemma 2.4]{Vorst} for the case (b). (Vorst actually shows $\E_n(A\rq{})=\E_n(A_2)\E_n(A_1)$, but the two equalities are equivalent.)

Let $A=A_0\oplus A_1\oplus\cdots$ be a graded ring and $\bf\in\Um_n(A)$. Let $\bf(0)$ denote the image of $\bf$ in $\Um_n(A_0)$ under the augmentation $A\to A_0$. In this notation we have

\begin{proposition}\label{Quillen}\emph{(Quillen Patching)} Assume $\bf(0)=\be$. Then $\bf\underset{A}\sim\be$ if and only if $\bf_\mu\underset{A_\mu}\sim\be$ for every maximal ideal $\mu\subset A_0$.
\end{proposition}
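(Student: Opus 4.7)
The direction $(\Rightarrow)$ is immediate since the elementary action is preserved by base change to any localization.

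For $(\Leftarrow)$ I plan to reduce to the classical Quillen patching for polynomial rings via a graded lift. The grading on $A$ produces the ring homomorphism $\Psi: A \to A[T]$ defined on homogeneous pieces by $a_k \mapsto T^k a_k$, and I set $\bf(T) := \Psi(\bf) \in \Um_n(A[T])$ (which is unimodular because $\Psi$ is a unital ring homomorphism). Then $\bf(T)|_{T=1} = \bf$ and $\bf(T)|_{T=0} = \bf(0) = \be$ by hypothesis, so it suffices to prove $\bf(T) \underset{A[T]}\sim \be$ and then specialize at $T=1$. Applying the classical Quillen patching \cite[Ch.~V]{Lam} to $\bf(T) \in \Um_n(A[T])$ over the base $A$ (where $\bf(T)(0)=\be$) reduces the problem to the local statement
\[
\bf(T)_\mu \underset{A_\mu[T]}\sim \be \qquad \text{for every maximal ideal } \mu \subset A. \qquad (\star)
\]

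To prove $(\star)$ for a fixed $\mu$ I plan to combine two intermediate equivalences via the Karoubi square
$$
\xymatrix{
A_\mu[T]\ar[r]\ar[d] & A_\mu[[T]]\ar[d]\\
A_\mu[T,T^{-1}]\ar[r] & A_\mu((T))
}
$$
together with Lemma \ref{Milnor}(b). On the \emph{power-series side}, $\bf(T)_\mu = \be + T\bh$, so the first entry $1+Th_1$ is a unit in $A_\mu[[T]]$; using this unit to kill the remaining entries and applying the standard $(u,0,\dots,0)\sim\be$ reduction for $n\ge 2$ yields $\bf(T)_\mu \underset{A_\mu[[T]]}\sim \be$. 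On the \emph{Laurent side}, the grading defines the ring automorphism $\alpha$ of $A[T,T^{-1}]$ sending $T\mapsto T$ and $a_k\mapsto T^k a_k$; it satisfies $\alpha(\bf)=\bf(T)$, $\alpha(\be)=\be$, and commutes with the $\E_n$-action. The crucial point is that $\alpha$ extends to an automorphism of $A_\mu((T))$: for $s\in A\setminus\mu$ with homogeneous decomposition $s=\sum s_k$, the residue $\bar s\ne 0$ in $A/\mu$ forces $s_N\notin\mu$ for some minimal index $N$, and then
\[
\alpha(s) = T^N\bigl(s_N + T s_{N+1} + T^2 s_{N+2} + \cdots\bigr)
\]
is a unit in $A_\mu((T))$ as a product of the unit $T^N$ and a power series with unit constant term $s_N\in A_\mu^\times$. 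Base-changing $\bf\underset{A_\mu}\sim\be$ to $A_\mu((T))$ and applying $\alpha$ gives $\bf(T)\underset{A_\mu((T))}\sim\be$. Patching the two sides via the Karoubi-square Milnor patching produces $(\star)$.

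The main obstacle I anticipate is the Laurent-side extension of $\alpha$: in the smaller ring $A_\mu[T,T^{-1}]$ the extension typically fails, because $\alpha(s)$ need not be invertible there when $\mu$ is non-homogeneous, so its lowest nonzero graded piece $s_N$ may land in $\mu$ for many choices of $s\in A\setminus\mu$. The trick is to enlarge to $A_\mu((T))$, where the formal Laurent structure turns the lowest $s_N$ with $s_N\notin\mu$ into the leading unit of the desired factorization. Once that extension is secured, the rest of the argument is a standard Quillen/Milnor assembly using Lemma~\ref{Milnor}(b) and the classical Quillen patching for polynomial rings.
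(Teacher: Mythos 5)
Your reduction frame is sound and matches the paper's route in spirit: the paper does not reprove this proposition but cites \cite[Corollary 7.4]{Elrows}, which likewise rests on Suslin's polynomial local-global principle, and your dilation map $\Psi:A\to A[T]$, $a_k\mapsto T^ka_k$, together with the power-series half of the local step, is fine. The proof breaks, however, exactly at the step you yourself flag as crucial. For a non-homogeneous maximal ideal $\mu\subset A$, your factorization $\alpha(s)=T^N\bigl(s_N+Ts_{N+1}+\cdots\bigr)$ silently assumes that $s_k=0$ for all $k<N$, where $N$ is minimal with $s_N\notin\mu$; but the lower components can be nonzero elements \emph{of} $\mu$, and then $\alpha(s)$ begins with non-unit coefficients. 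Concretely, let $A=k[x,y]$ with the standard grading, $\mu=(x,\,y-1)$, and $s=x+y^2\notin\mu$. Then $\alpha(s)=xT+y^2T^2=T(x+y^2T)$, and $x+y^2T$ is not a unit in $A_\mu((T))$: its inverse in $k(x,y)((T))$ is $x^{-1}\sum_{i\ge0}(-y^2x^{-1})^iT^i$, whose coefficients $\pm y^{2i}x^{-(i+1)}$ do not lie in $A_\mu$. So $\alpha$ does not extend to $A_\mu((T))$, and the Laurent side of your argument collapses. (This is precisely the difficulty that vanishes if one localizes only at maximal ideals of $A_0$, where $\Psi$ localizes tautologically because elements of $A_0\setminus\mu$ have degree zero; the whole content of the graded statement is to handle maximal ideals of $A$ itself.)

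Second, even granting both sides, the final assembly misuses Lemma \ref{Milnor}(b). The Milnor patching property starts from a row over the corner $A_\mu[[T]]$ that is trivial over $A_\mu((T))$ and produces \emph{some} row over the pullback $A_\mu[T]$ mapping onto it up to elementary equivalence; it does not assert that a \emph{given} row over $A_\mu[T]$ whose images over $A_\mu[[T]]$ and over $A_\mu((T))$ are elementarily trivial is itself elementarily trivial over $A_\mu[T]$. That descent would be a different local-global statement, and it is false for general Karoubi squares: writing the two trivializations as $\epsilon_1\in\E_n(A_\mu[[T]])$ and $\epsilon_2$ over the other corner, the discrepancy $\epsilon_2'(\epsilon_1')^{-1}$ over $A_\mu((T))$ has first row $\be$, but its lower-right $(n-1)\times(n-1)$ block lies only in $\GL_{n-1}(A_\mu((T)))$, and splitting that block into factors coming from the two corners is exactly the $K_1$-type obstruction that makes Milnor squares produce nontrivial patched objects in the first place. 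So $(\star)$ remains unproved on both counts, and with it the backward implication.
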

\noindent (Here $A_\mu=(A_0\setminus\mu)^{-1}A$.)

This is \cite[Corollary 7.4]{Elrows}. It is based on Suslin's $K_1$-analogue \cite[Theorem 3.1]{Suslin-Linear} of Quillen's well-known local-global patching for projective modules \cite[Theorem 1]{Quillen}.

\section{Integral and subintegral extensions}\label{Integral}

\subsection{Integral extensions} Let $M=M(\Delta)\cup M(\Gamma)$ be a pyramidal decomposition of an affine positive normal monoid $M$ with vertex $m$ and $\deg:M\to\ZZ$ be an associated pyramidal degree. For an element $f\in R[M]$, we define $\deg(f)$ as the maximum degree of the terms of $f$. Call an element $f=\sum_{j=1}^k r_jm_j\in R[M]$ \emph{monic} (with respect to $\deg$) if $r_1m_1=um^c$ for some $u\in U(R)$, $c\in\NN$, and $\deg(m_1)>\deg(m_j)$ for $j=2,\ldots,k$. 

Call a ring homomorphism $A\to B$ \emph{integral} if $B$ is integral over $\Im(A)$.

\begin{lemma}\label{integral-extension}
Let $M=M(\Delta)\cup M(\Gamma)$ be a pyramidal decomposition, $\deg:M\to\ZZ$ be an associated pyramidal degree, and $f\in R[M]$ be a monic element. Then the ring homomorphism $R[M(\Gamma)]\to R[M]/fR[M]$ is integral.
\end{lemma}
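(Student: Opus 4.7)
My plan is to show $R[M]/fR[M]$ is a finitely generated $R[M(\Gamma)]$-module; integrality of the ring extension will then follow immediately.

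\smallskip\noindent\emph{Nilpotence lemma.} The first key step will be: for each $h\in\Hilb(M)$ with $\deg(h)>0$ there exists $N\geq 1$ with $Nh-cm\in M$, so that $h^N\in m^cR[M]$. Since $\deg$ is $\geq 0$ on $M(\Delta)$ with equality exactly on the base $\Delta\cap\Gamma$, one has $M(\Gamma)=\{n\in M:\deg(n)\leq 0\}$, so $\deg(h)>0$ places $\Phi(h)\in\Delta\setminus(\Delta\cap\Gamma)$. Writing $\ell_H$ for the primitive integer supporting functional of a facet $H$ of $\RR_+M$, I will show $\ell_H(h)>0$ for every facet $H$ with $\ell_H(m)>0$ (i.e.\ not containing $m$): otherwise $\Phi(h)\in F_H:=H\cap\mathcal H$, and any convex combination $\Phi(h)=t\Phi(m)+(1-t)q$ realizing $\Phi(h)$ in $\Delta$ (with $q$ on the base) yields $0=t\ell_H(\Phi(m))+(1-t)\ell_H(q)$ with $\ell_H(\Phi(m))>0$ and $\ell_H(q)\geq 0$, forcing $t=0$ and thus $\Phi(h)\in\Delta\cap\Gamma$, contradicting $\deg(h)>0$. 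Then $N\geq\max_{H\not\ni m}c\ell_H(m)/\ell_H(h)$ gives $\ell_H(Nh-cm)\geq 0$ for every facet, and by normality of $M$ one obtains $Nh-cm\in M$.

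\smallskip\noindent\emph{Reduction.} Let $S=\Hilb(M)\cap\{\deg>0\}$, a finite set with nilpotence bounds $N_h$ from the lemma, and let $V\subset R[M]/fR[M]$ be the $R[M(\Gamma)]$-submodule generated by the finitely many monomials $\prod_{h\in S}h^{e_h}$, $0\leq e_h<N_h$. I will then show every $n\in M$ lies in $V$ by induction on $\deg(n)$. For $\deg(n)\leq 0$: $n\in M(\Gamma)\subset V$. For $\deg(n)>0$: fix a Hilbert-basis representation and group its terms as $n=\gamma+\sum_{h\in S}e_h h$ with $\gamma\in M(\Gamma)$. If $e_h<N_h$ for every $h$, then $n=\gamma\cdot\prod h^{e_h}\in V$ directly. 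Otherwise some $h_0$ has $e_{h_0}\geq N_{h_0}$, so $n\in m^cR[M]$ and $n=m^c\cdot n'$ with $n'\in M$; substituting $vm^c\equiv-\sum_{j\geq 2}r_jm_j\pmod f$ yields $n\equiv-v^{-1}\sum_{j\geq 2}r_j(n'm_j)\pmod f$, where each $n'm_j\in M$ satisfies $\deg(n'm_j)=\deg(n)-c\deg(m)+\deg(m_j)<\deg(n)$ and hence lies in $V$ by the inductive hypothesis.

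\smallskip\noindent\emph{Main obstacle.} The conclusion is that $R[M]/fR[M]=V$ is a finite $R[M(\Gamma)]$-module, so the extension is integral. The crucial and nontrivial step is the nilpotence lemma; its proof genuinely exploits the tangency condition (c) of Definition~\ref{pyramidal-valuation}, which forces positive-degree Hilbert basis elements to be strictly positive on every facet of $\RR_+M$ not containing $m$. Without this, a positive-degree element could sit on a facet opposite to $m$, the bound $N$ would fail to exist, and the reduction would not terminate in a finite generating set.
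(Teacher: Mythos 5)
Your proof is correct, and like the paper's it actually establishes the stronger fact that $R[M]/fR[M]$ is a finitely generated module over (the image of) $R[M(\Gamma)]$, from which integrality is immediate. But the finiteness is obtained by a different route. The paper's proof has two steps: a division with remainder --- every $g\in R[M]$ equals $fg_1+f_1$ with $\deg(f_1)<\deg(f)$, which rests on the fact that any monomial $n\in M$ with $\deg(n)\ge\deg(f)$ is divisible by $m^c$ in $M$ (this is where normality and the pyramid identity $\RR_+\Delta=\RR_+\Phi(m)+\RR_+(\Delta\cap\Gamma)$ enter) --- so that the quotient is generated over $R[M(\Gamma)]$ by the monomials of degree at most $\deg(f)$; and then a citation of \cite[Theorem 2.12]{Kripo}, which asserts that $RM_{\le\deg(f)}$ is a finitely generated $R[M(\Gamma)]$-module. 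You never prove (or need) this divisibility of high-degree monomials by $m^c$; instead your nilpotence lemma --- $N_hh-cm\in M$, i.e.\ $h^{N_h}\in m^cR[M]$, for every positive-degree Hilbert basis element $h$ --- plays that role, and at the same time it hands you an explicit finite generating set $\prod_{h\in S}h^{e_h}$, $0\le e_h<N_h$, so no truncation theorem is needed. What each approach buys: yours is self-contained, using only supporting hyperplanes of $\RR_+M$, the pyramid structure, and Lemma \ref{conductor}(b); the paper's is much shorter because it delegates the finiteness to a standard structure result on monoid rings.

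Two minor points, neither affecting correctness. First, your closing paragraph misattributes the key mechanism: the tangency condition (c) of Definition \ref{pyramidal-valuation} is never used in your own argument. Your proof that $\ell_H(h)>0$ for every facet $H$ of $\RR_+M$ with $m\notin H$ uses only that $\Delta$ is a pyramid with apex $\Phi(m)$ whose base $\Delta\cap\Gamma$ lies in $\Phi(M)$ (conditions (a) and (b)), together with $\ell_H\ge0$ on $\RR_+M$; tangency is needed elsewhere in the paper (for the descent in Section \ref{Pyramidal-descent}), not for this lemma. Second, your assertion that $\deg$ vanishes on $M(\Delta)$ exactly on the base, which underlies $M(\Gamma)=\{n\in M\ |\ \deg(n)\le0\}$, deserves one more line: the zero set of the induced linear functional meets $\RR_+\Delta$ in a face containing the facet $\RR_+(\Delta\cap\Gamma)$, and since $\deg\not\equiv0$ on $M(\Delta)$ that face cannot be all of $\RR_+\Delta$, hence it equals the facet.
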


\begin{proof}
Since $M$ is normal, every element $g\in R[M]$ admits a representation $g=fg_1+f_1$, where $g_1,f_1\in R[M]$ with $\deg(f_1)<\deg(f)$. In particular, as an $R[M(\Gamma)]$-module, $R[M]/fR[M]$ is generated by the image of the subset $M_{<\deg(f)}:=\{m\in M\ |\ \deg(m)<\deg f\}$ and, therefore, by the image of $M_{\le\deg(f)}:=\{m\in M\ |\ \deg(m)\le\deg f\}$. But, according to \cite[Theorem 2.12]{Kripo}, $RM_{\le\deg(f)}$ is a finitely generated $R[M(\Gamma)]$-module.
\end{proof}

\subsection{Subintegral extensions} Call an extension of rings $A\subset B$ \emph{elementary subintegral} if $B=A[b]$ for some $b\in B$ with $b^2,b^3\in A$. A ring extension $A\subset B$ is \emph{subintegral} if it is a filtered union of elementary subintegral extensions.

\begin{theorem}\label{subintegral}
For a subintegral extension of rings $A\subset B$ and an element $\bf\in\Um_n(A)$, where $n\ge3$, we have $\bf\underset{A}\sim\be$ if and only if $\bf\underset{B}\sim\be$.
\end{theorem}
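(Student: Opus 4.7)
The direction $\bf\underset{A}\sim\be\Rightarrow\bf\underset{B}\sim\be$ is immediate from $\E_n(A)\subset\E_n(B)$. For the converse, the plan is to reduce to the elementary subintegral case and then exploit a conductor square together with Milnor patching. First, since $B$ is a filtered union of elementary subintegral extensions of $A$ and any equivalence $\bf\underset{R}\sim\be$ is witnessed by a finite product of elementary matrices, I would reduce to $B=A[b]$ with $b^2,b^3\in A$.

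Next I would introduce the conductor $\mathfrak{c}=\{a\in A:aB\subset A\}$, which in this elementary subintegral setting reduces to $\{a\in A:ab\in A\}$ and contains $b^2$ and $b^3$; in particular $\mathfrak{c}$ is a common ideal of $A$ and $B$. The pull-back square
$$
\xymatrix{
A\ar[r]\ar[d]&B\ar[d]\\
A/\mathfrak{c}\ar[r]&B/\mathfrak{c}
}
$$
has surjective vertical arrows and hence enjoys Milnor patching by Lemma \ref{Milnor}(a). A direct check gives $bB\cap A\subset\mathfrak{c}$, whence the $A/\mathfrak{c}$-module decomposition $B/\mathfrak{c}=A/\mathfrak{c}\oplus(A/\mathfrak{c})\bar b$ with $(A/\mathfrak{c})\bar b$ a square-zero ideal ($\bar b^2=0$). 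The projection $\pi\colon B/\mathfrak{c}\twoheadrightarrow A/\mathfrak{c}$ killing $\bar b$ is then a ring retraction of the inclusion, and the induced map $\E_n(B/\mathfrak{c})\to\E_n(A/\mathfrak{c})$ converts any elementary witness of $\bar\bf\underset{B/\mathfrak{c}}\sim\be$ into one of $\bar\bf\underset{A/\mathfrak{c}}\sim\be$, because $\pi$ fixes $\bar\bf\in\Um_n(A/\mathfrak{c})$. Lifting that witness back to $\E_n(A)$ produces $\epsilon_A\in\E_n(A)$ with $\bf\cdot\epsilon_A=\be+\delta$ for some $\delta\in\mathfrak{c}^n$, so $\bf\underset{A}\sim\be+\delta$.

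The expected main obstacle is the final step: showing $\be+\delta\underset{A}\sim\be$. Note that $\be+\delta\in\Um_n(A)$, reduces to $\be$ modulo $\mathfrak{c}$, and satisfies $\be+\delta\underset{B}\sim\be$ by combining $\bf\underset{A}\sim\be+\delta$ with $\bf\underset{B}\sim\be$. I would extract the desired $\E_n(A)$-equivalence from these two compatible data by invoking the factorization $\E_n(B/\mathfrak{c})=\E_n(B)\cdot\E_n(A/\mathfrak{c})$ that underlies the proof of Lemma \ref{Milnor}(a), together with the normality $\E_n\triangleleft\GL_n$ for $n\ge3$ from Lemma \ref{Normal}. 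Making this last patching step precise, and thereby producing the elementary matrix over $A$ that trivialises $\be+\delta$, is the technical heart of the argument, and is where the hypothesis $n\ge3$ is essential.
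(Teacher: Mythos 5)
Your reductions are correct as far as they go: passing to an elementary subintegral extension $B=A[b]$ via a finite chain, the conductor $\mathfrak{c}=\{a\in A:aB\subset A\}$, the inclusion $bB\cap A\subset\mathfrak{c}$, the square-zero decomposition $B/\mathfrak{c}=A/\mathfrak{c}\oplus(A/\mathfrak{c})\bar b$ with its ring retraction $\pi$, and the conclusion $\bf\underset{A}\sim\be+\delta$ with $\delta\in\mathfrak{c}^n$ are all valid. But the argument stops exactly where the theorem begins. The step you defer --- deducing $\be+\delta\underset{A}\sim\be$ from $\be+\delta\underset{B}\sim\be$ --- is not a patching technicality; it is the full strength of the statement, and the tools you name cannot deliver it. The factorization $\E_n(B/\mathfrak{c})=\E_n(B)\E_n(A/\mathfrak{c})$ is vacuous in this square: $B\to B/\mathfrak{c}$ is surjective, so $\E_n(B)\to\E_n(B/\mathfrak{c})$ is already onto and the factorization carries no information. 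What you actually need is to correct a trivializing matrix $\epsilon_B\in\E_n(B)$ of $\be+\delta$ by an element of the stabilizer of $\be$ so that its reduction mod $\mathfrak{c}$ has all entries in $A/\mathfrak{c}$ --- equivalently, an excision statement for the relative orbit sets $\Um_n(-,\mathfrak{c})/\E_n(-,\mathfrak{c})$ along $A\subset B$. Such excision fails for general finite extensions sharing the ideal $\mathfrak{c}$ (the same phenomenon as the failure of excision for relative $K_1$), so no formal manipulation of the conductor square, normality of $\E_n$ included, can prove it; subintegrality must enter in an essential, non-formal way. Note also that even if the matrix-level patching went through, it would only produce $\epsilon\in\GL_n(A)$ with $(\be+\delta)\epsilon=\be$, i.e.\ completability of the row, which for $n\ge3$ is strictly weaker than the asserted elementary equivalence.

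For comparison: the paper does not prove this theorem at all --- it quotes it as the main result of the reference cited as [Subintegral] (the `unimodular row' counterpart of Swan's Theorem 14.1 on projective modules), and it stresses that, although the statement concerns arbitrary rings, the known proof is not formal: it goes through monoid rings and relies on the main result of [Elrows], i.e., on transitivity of the elementary action over monoid rings of a restricted class. Roughly, the relations $b^2,b^3\in A$ make the universal elementary subintegral extension a monoid-ring extension of the type $R[t^2,t^3]\subset R[t]$ (with further variables), and the trivialization over $A$ is extracted from that monoid-ring transitivity theorem rather than from the conductor square. So your outline has a genuine gap precisely at the step you call the technical heart, and that gap is the theorem itself.
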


This is the main result of \cite{Subintegral}. It represents the `unimodular row' counterpart of \cite[Theorem 14.1]{Swan}. Although the statement is about arbitrary rings, the proof in \cite{Subintegral} uses monoid rings. More precisely, it uses the main result of \cite{Elrows}. A stronger result in the context of the \emph{Euler class groups} under subintegral extensions was later derived in \cite{India2}.

\section{Reduction to the interior of normal monoids}\label{Reduction}

\begin{lemma}\label{reduction}
\begin{enumerate}[\rm(a)]
\item
In order to prove Theorem \ref{main} for torsion free monoids, it is sufficient to prove it in the special case when $R$ is a local Noetherian ring and $M=N_*$ for an affine positive normal monoid $N$.
\item
In order to prove Theorem \ref{main} for a ring $R$ and an affine positive monoid $M$, it is enough to prove it for the monoid rings $R[M(F)_*]$, where $F$ runs over the faces of $\Phi(M)$ (including $\Phi(M)$ itself).
\end{enumerate}
\end{lemma}

\begin{proof}
(a) Let $L$ be a torsion free monoid. Since $L$ is the inductive limit of its affine submonoids, we can assume that $L$ is itself affine. Consider the pullback diagram
$$
\begin{aligned}
\xymatrix{
R\left[\left(L\setminus U(L)\right)\cup\{1\}\right]\ar[r]\ar[dd]&R[L]\ar[dd]^\pi\\
\\
R\ar[r]&R[U(L)]},
\end{aligned}\qquad \ker\pi=R(L\setminus U(L)).
$$

Since $U(L)$ is a free abelian group, Theorem \ref{Suslin-main} yields the transitivity of the elementary actions on $\Um_n(R[U(L)])$. Applying Lemma \ref{Milnor}(a) and using that $\left(L\setminus U(L)\right)\cup\{1\}$ is the union of its affine submonoids, we can also assume that $L$ is an affine positive monoid. Since $\sn(L)$ is a filtered union of subintegral extensions of monoids, by Theorem \ref{subintegral} we can further assume $L$ is seminormal. 

Since the elementary action on $\Um_n(R)$ is transitive for $n\ge d+2$, Lemma \ref{Gordan}(b) and Proposition \ref{Quillen} make it possible to reduce the general case to $R$ local.

Let $F_1,\ldots,F_k$ be the set of non-empty faces of the polytope $\Phi(L)$, including $\Phi(L)$ itself, indexed in such a way that $i\le j$ implies $\dim F_i\le\dim F_j$. In particular, $F_k=\Phi(L)$. 
We have the pull-back diagrams of $R$-algebras with the natural horizontal injective maps:
$$
(\mathbb D_i)\quad
\begin{aligned}
\xymatrix{
R[L(F_i)_*]\ar[rr]\ar[dd]&& R[L]\big/\sum_{j>i}R\inte(L(F_j))\ar[dd]^{\pi_i}\\
\\
R\ar[rr]&&R[L]\big/\sum_{j\ge i}R\inte(L(F_j))
},\\
\\
\ker\pi_i=R(L\cap\inte(F_i)).
\end{aligned}
$$
(Above, we assume $\sum_{j>k}R\inte(L(F_j))=0$.)

By Lemma \ref{localization-seminormal}(b), for every $i$, the ring at the upper-left corner of $(\mathbb D_i)$ is of the type $R[N_*]$, where $N$ is an affine, positive, and normal monoid. Moreover, the ring at the upper-right corner of $(\mathbb D_k)$ is $R[L]$ and that at the lower-right corner of $(\mathbb D_1)$ is $R$. Consequently, Lemma \ref{Milnor}(a) allows induction on $i$. 

\medskip\noindent(b) The claim follows from the same inductive process, based on the corresponding pull-back diagrams. 
\end{proof}

\section{Quasi-monic elements}\label{Quasimonic}

The following lemma and its proof, suggested by the referee, is a simplification of the original overly complicated version.

\begin{lemma}\label{avoidance}
Let $B$ be a Noetherian ring of dimension $d$, $\ba=(b_1,\ldots,b_n)\in\Um_n(B)$ for some $n\ge2$, and $Bb_1+By=B$. Then, for any finite family of ideals $\nu_1,\ldots,\nu_p\subset B$, there is an element $b\in Bb_2+\cdots+Bb_n$ such that, for all natural numbers $c$, one has 
$$
b_1+by^c\notin\nu_i,\qquad i=1,\ldots,p.
$$
\end{lemma}

\begin{proof}
Enlarging the $\nu_i$ one may assume they are maximal ideals. We may assume that they are distinct, hence comaximal. If $b_1\notin\nu_i$ for all $i$, take $b=0$. Else reorder and choose $q$ so that $b_1\in\nu_i$ if and only if $i\le q$. Using the Chinese Remainder Theorem, choose $z\in B$ so that $z\notin\nu_i$ if and only if $i\le q$. The restriction to $Bz$ of
$B\to\prod_{i\le q} B/\nu_i$
is still a surjective map and it has $Bzb_1$ in its kernel. So $Bzb_2+\cdots+Bzb_n$ maps onto $\prod_{i\le q} B/\nu_i$. Choose $b\in Bzb_2+\cdots+Bzb_n$ so that $b\notin\nu_i$ if $i\le q$. Notice that $by^c\notin\nu_i$ if and only if $b_1\in\nu_i$.
\end{proof}

\begin{definition}\label{tilted}
Let $R$ be a ring. An $R$-subalgebra (not necessarily Noetherian or monomial) $A\subset R[\ZZ_+^r]$ is called \emph{$t_1$-tilted} if there is a neighborhood $\Phi(t_1)\in\mathcal U\subset\Phi(\ZZ_+^r)$ such that
$m\in\ZZ_+^r\setminus\{0\}$ and $\phi(m)\in\mathcal U$ imply $m\in A$.
\end{definition}

For an element $\bg\in\Um_n(R[t_1,\ldots,t_r])$ we denote by $\bg|_{t_1=0}$ the image of $\bg$ in $\Um_n(R[t_2,\ldots,t_r])$ after substituting $0$ for $t_1$.

\begin{lemma}\label{pre-quasimonic}
Let $R$ be a local Noetherian ring of dimension $d$. Assume $A\subset R[\ZZ_+^r]$ is a $t_1$-tilted subalgebra and $\bg=(g_1,\ldots,g_n)\in\Um_n(A)$ for some  $n\ge2$, such that $\bg|_{t_1=0}\in\Um_n(R)$. 
Then there exists $\bh=(h_1,\ldots,h_n)\in\Um_n(A)$, for which $\bg\underset{A}\sim\bh$ and
$\height_{R[\ZZ_+^r]}\left(R[\ZZ_+^r]h_1+\cdots+R[\ZZ_+^r]h_i\right)\ge i$ for all $i$.
\end{lemma}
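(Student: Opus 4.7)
The approach would be to mimic the standard inductive prime-avoidance argument of Lemma \ref{Lam-height}(a), while carefully arranging that each correction lies in $A$ rather than in the full polynomial ring $R[\ZZ_+^r]$. The $t_1$-tiltedness of $A$ and the hypothesis $\bg|_{t_1=0}\in\Um_n(R)$ work in tandem to make this possible.

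First I would normalize $g_1$. Writing $g_l=r_l+t_1\tilde f_l$ with $r_l\in R$, the row $(r_1,\ldots,r_n)$ is unimodular over the local ring $R$, hence $\E_n(R)$-equivalent to $(1,0,\ldots,0)$. Carrying the same elementary operations over to $\bg$ (they lie in $\E_n(A)$ since $R\subset A$), I may assume $g_1=1+t_1 f_1$ for some $f_1\in R[\ZZ_+^r]$. Since $g_1$ has constant term $1$, it avoids every minimal prime $\mathfrak p_0 R[\ZZ_+^r]$ of $R[\ZZ_+^r]$, which settles the base case $h_1:=g_1$, and it makes $g_1$ and $t_1$ comaximal in $R[\ZZ_+^r]$.

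Now I would induct on $i$. Suppose at stage $i\ge 2$ the current row (still denoted $\bg\in\Um_n(A)$) has $g_1=1+t_1 f_1$ and $\height_{R[\ZZ_+^r]}I\ge i-1$ for $I:=\sum_{j<i}R[\ZZ_+^r]g_j$. Let $\nu_1,\ldots,\nu_p$ be the minimal primes of $I$ and set $\bar B=R[\ZZ_+^r]/I$. The crucial observation is that $\bar t_1\in\bar B$ is a \emph{unit}: $g_1-t_1 f_1=1$ and $g_1\in I$ yield $\bar t_1(-\bar f_1)=1$. Since the original row is unimodular, $(\bar g_i,\ldots,\bar g_n)\in\Um_{n-i+1}(\bar B)$, hence $\bar B\bar g_i+\bar B\bar t_1=\bar B$, and Corollary \ref{avoidance} applied in $\bar B$ with $y=\bar t_1$ and the primes $\bar\nu_1,\ldots,\bar\nu_p$ produces $\bar b=\sum_{j>i}\bar b_j\bar g_j$ and an infinite sequence $c_1<c_2<\cdots$ with $\bar g_i+\bar b\bar t_1^{c_k}\notin\bar\nu_l$ for all $k,l$. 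Choosing arbitrary lifts $b_j\in R[\ZZ_+^r]$ of the $\bar b_j$ and using that $\nu_l\supset I$, the same non-vanishing holds upstairs: $g_i+\sum_{j>i}b_j t_1^{c_k}g_j\notin\nu_l$ for every $k,l$. Finally I would pick $c=c_k$ large enough that $b_j t_1^c\in A$ for every $j>i$; this succeeds by $t_1$-tiltedness, because for each fixed polynomial $b_j$ the monomials of $b_j t_1^c$ have $\phi$-images converging to $\phi(t_1)$ as $c\to\infty$, hence eventually fall into the witness neighborhood $\mathcal U$. Setting $a_j:=b_j t_1^c\in A$ and $h_i:=g_i+\sum_{j>i}a_j g_j\in A$ gives an $A$-elementary modification of $\bg$ in which $h_i$ avoids every $\nu_l$, so $\height_{R[\ZZ_+^r]}(I+R[\ZZ_+^r]h_i)\ge i$. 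Iterating through $i$ yields the desired $\bh$.

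The hard part is the mismatch between the ring in which elementary operations happen ($A$) and the ring in which heights are measured ($R[\ZZ_+^r]$): a naive use of Lemma \ref{Lam-height}(a) produces corrections outside $A$. Multiplying by $t_1^c$ rescues $A$-membership thanks to $t_1$-tiltedness, but a priori risks pushing the modified $h_i$ into every $\nu_l$ and destroying the avoidance. The role of the hypothesis $\bg|_{t_1=0}\in\Um_n(R)$ together with $R$ being local is precisely to prevent this disaster: it allows the normalization $g_1=1+t_1 f_1$, which forces $\bar t_1$ to be a unit in every quotient $\bar B$ encountered during the induction and thereby makes the $t_1^c$-trick harmless.
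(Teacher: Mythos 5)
Your proposal is correct and is essentially the paper's own argument: the same initial normalization of constant terms using that $R$ is local and $\bg|_{t_1=0}\in\Um_n(R)$, the same inductive prime avoidance via Corollary \ref{avoidance} with $y=t_1$, and the same use of $t_1$-tiltedness to make the corrections $b_jt_1^c$ land in $A$ for $c\gg0$ taken from the infinite sequence. The only (cosmetic) difference is that you normalize just $g_1$ and run the avoidance in the quotient $R[\ZZ_+^r]/I$, where $t_1$ becomes a unit, whereas the paper normalizes the first $n-1$ components (its condition (\ref{coprime-series})) and applies Corollary \ref{avoidance} in $R[\ZZ_+^r]$ itself, afterwards discarding the terms that lie in the minimal primes.
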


\begin{proof}
For an element $\bl=(l_1,\ldots,l_n)\in\Um_n(A)$ we introduce the condition
\begin{equation}\label{coprime-series}
R[\ZZ_+^r]l_1+R[\ZZ_+^r]t_1=\cdots=R[\ZZ_+^r]l_{n-1}+R[\ZZ_+^r]t_1=R[\ZZ_+^r].
\end{equation} 

Since $R$ is local and $\bg|_{t_1=0}\in\Um_n(R)$, by a suiatable elementary transformation over $R$ we can achieve $g_1|_{t_1=0}=\cdots=g_{n-1}|_{t_1=0}=1$. Consequently, without loss of generality, we can assume that $\bg$ satisfies (\ref{coprime-series}).

Let $0\le k<n$. Assume there exists $\bh'=(h'_1,\ldots,h'_n)\in\Um_n(A)$, for which
\begin{enumerate}[\rm$\centerdot$]
\item $\bg\underset{A}\sim\bh'$,
\item
the condition (\ref{coprime-series}) is satisfied,
\item the components satisfy the inequalities in Lemma \ref{pre-quasimonic} for $i<k$ (when $k>1$).
\end{enumerate}
 
We will induct on $k$ to achieve the height inequalities for all $i\le n-1$; there is nothing to prove for $i=n$.

Without loss of generality, $R[\ZZ_+^r]h_1'+\cdots+R[\ZZ_+^r]h_{k-1}'\not=R[\ZZ_+^r]$. Put
$$
\{\nu_1,\ldots,\nu_p\}=\begin{cases}
\text{the minimal primes over $R[\ZZ_+^r]h_1'+\cdots+R[\ZZ_+^r]h_{k-1}'$, if $k>1$},\\
\text{the minimal primes in $R[\ZZ_+^r]$, if $k=1$}.
\end{cases}
$$
Since $\bh'$ satisfies (\ref{coprime-series}), Lemma \ref{avoidance} implies the existence of an element
\begin{align*}
\tilde h= f_1h'_1+\cdots+&f_{k-1}h'_{k-1}+f_{k+1}h'_{k+1}+
\cdots+f_nh'_n,\\
&\ f_1,\ldots,f_{k-1},f_{k+1},\ldots,f_n\in\RR[\ZZ_+^r],
\end{align*} 
such that for all natural numbers $c$ one has
$$
h'_k+\tilde ht_1^c\not\in\nu_s,\qquad s=1,\ldots,p.
$$
Since  $h'_1,\ldots,h'_{k-1}\in\nu_s$ for every $s$, the element $h'=f_{k+1}h'_{k+1}+
\cdots+f_nh'_n$ satisfies
\begin{equation}\label{the-avoidance}
h'_k+h't_1^c\not\in\nu_s,\qquad s=1,\ldots,p,\qquad c\in\NN.
\end{equation}
Since $A$ is $t_1$-tilted, for $c\gg0$ we also have 
$$
h't_1^c\in Ah'_{k+1}+\cdots+Ah'_n.
$$
The last inclusion implies that, for $c\gg0$, we have
\begin{equation}\label{transformed}
\bh'\underset{A}\sim(h'_1,\ldots,h'_{k-1},h'_k+h't_1^c,h'_{k+1},\ldots,h'_n).
\end{equation}
In view of (\ref{the-avoidance}), the minimal primes over the ideal
$$
R[\ZZ_+^r]h'_1+\cdots +R[\ZZ_+^r]h'_{k-1}+R[\ZZ_+^r]\left(h'_k+h't_1^{c_j}\right)\subset R[\ZZ_+^r]
$$
are not among the $\nu_s$. In particular, the unimodular row on the right of (\ref{transformed}) satisfies (\ref{coprime-series}) and the height inequalities for $i=1,\ldots,k$.
\end{proof}

\medskip For an element $f\in R[\ZZ_+^r]$, its leading term in the lexicographical order with respect to $t_1>\ldots>t_r$ will be denoted by $L(f)$. Call an element $f\in R[\ZZ_+^r]$ \emph{quasi-monic} is $L(f)=um$ for some $u\in U(R)$ and $m\in\ZZ_+^r$.

\begin{corollary}\label{quasimonic}
Let $R$ be a local Noetherian ring of dimension $d$. Assume $A\subset R[\ZZ_+^r]$ is a $t_1$-tilted $R$-subalgebra and $\bg=(g_1,\ldots,g_n)\in\Um_n(A)$ for some $n\ge d+2$. Assume $\bg|_{t_1=0}\in\Um_n(R)$. Then there exists $\bh=(h_1,\ldots,h_n)$, such that $h_n$ is quasi-monic and $\bg\underset{A}\sim\bh$.
\end{corollary}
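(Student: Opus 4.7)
My plan is to combine Lemma \ref{pre-quasimonic} with a Suslin-style iterated leading-coefficient argument, bridged via the $t_1$-tilted hypothesis. First, apply Lemma \ref{pre-quasimonic} to produce $\bh=(h_1,\ldots,h_n)\in\Um_n(A)$ with $\bg\underset{A}\sim\bh$ and $\height_{R[\ZZ_+^r]}(R[\ZZ_+^r]h_1+\cdots+R[\ZZ_+^r]h_i)\ge i$ for every $i$. Set $J:=R[\ZZ_+^r]h_1+\cdots+R[\ZZ_+^r]h_{n-1}$; the hypothesis $n\ge d+2$ then gives $\height_{R[\ZZ_+^r]}J\ge n-1\ge d+1$.

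Next I would extract a combination $g=\sum_{i=1}^{n-1}f_ih_i\in J$, with $f_i\in R[\ZZ_+^r]$, whose lex leading term $L(g)$ (with respect to $t_1>\cdots>t_r$) is a monomial $m_0\in\ZZ_+^r$ with coefficient $1$. Writing $R[\ZZ_+^r]=R[t_1,\ldots,t_r]$ and iterating Lemma \ref{Lam-height}(b) in the variables $t_1,t_2,\ldots,t_r$, each step preserves the $\ge d+1$ bound, so after $r$ iterations one obtains an ideal of $R$ of height $\ge d+1>\dim R$, forcing it to equal $R$. A standard iterated-leading-coefficient manipulation (at each stage, one writes $1$ as a combination of leading $t_{k+1}$-coefficients of elements of the previous ideal, multiplies those elements by compensating powers of $t_{k+1}$ to align their top $t_{k+1}$-degree, and sums) then promotes this to an honest $g\in J$ with $L(g)=m_0$ and exhibits the explicit witnesses $f_1,\ldots,f_{n-1}\in R[\ZZ_+^r]$.

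Finally, I modify $h_n$ by the elementary moves $h_n\mapsto h_n+\sum_{i=1}^{n-1}(t_1^Nf_i)h_i=h_n+t_1^Ng$ for $N\gg 0$. The main obstacle is that the coefficients $f_i$ a priori live in $R[\ZZ_+^r]$ rather than in $A$, and this is exactly what the $t_1$-tilted hypothesis handles: each monomial $m'$ appearing in some $f_i$ satisfies $\Phi(t_1^Nm')\to\Phi(t_1)$ in $\Phi(\ZZ_+^r)$ as $N\to\infty$, so, since only finitely many monomials are involved, for $N$ sufficiently large every product $t_1^Nm'$ lands in the distinguished neighborhood of $\Phi(t_1)$ and hence in $A$. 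Thus $t_1^Nf_i\in A$ for each $i$, and each modification is legitimately a step in $\E_n(A)$. Enlarging $N$ further so that the $t_1$-degree of $t_1^Nm_0$ strictly exceeds that of $L(h_n)$ prevents any cancellation at the lex top, so the new last component has $L(h_n+t_1^Ng)=t_1^Nm_0$, a unit in $U(R)$ times a monomial in $\ZZ_+^r$, i.e., is quasi-monic, as required.
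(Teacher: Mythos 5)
Your proof is correct and follows essentially the same route as the paper's: first Lemma \ref{pre-quasimonic} to arrange the height bound, then iterated application of Lemma \ref{Lam-height}(b) along the chain $R[t_1,\ldots,t_r]\supset R[t_2,\ldots,t_r]\supset\cdots\supset R$ to extract a quasi-monic element $g$ from the ideal generated by the leading components, and finally the $t_1$-tilted hypothesis to make the elementary move $h_n\mapsto h_n+t_1^Ng$ legitimate over $A$ while ensuring $L(h_n+t_1^Ng)=t_1^NL(g)$. The only cosmetic difference is that the paper takes $g$ from the ideal generated by the first $d+1$ components rather than the first $n-1$.
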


\begin{proof}
By Lemma \ref{pre-quasimonic}, without loss of generality we can assume $\height(I)\ge d+1$ for $I=R[\ZZ_+^r]g_1+\cdots+R[\ZZ_+^r]g_{d+1}$. Then Lemma \ref{Lam-height}(b), applied to the decreasing sequence of rings 
$R[t_1,t_2,\ldots,t_r]\supset R[t_2,\ldots,t_r]\supset\ldots\supset R[t_r]$, implies that $I$ contains a quasi-monic element $g$. Assume $L(g)=um$ for some $u\in U(R)$ and $m\in\ZZ_+^r$. Then, using that $A$ is $t_1$-tilted, we can take
$
\bh=(g_1,\ldots,g_{n-1},g_n+t_1^cg)$ with $c\gg0$. In fact, for $c\gg0$ we have $L(g_n+t_1^cg)=t_1^c L(g)$ and $t_1^cg\in Ag_1+\cdots+Ag_{d+1}$.
\end{proof}

\section{Pyramidal descent}\label{Pyramidal-descent}

\subsection{Local pyramidal descent}\label{Local-pyramidal-descent} For the rest of Section \ref{Local-pyramidal-descent}, we assume that:
\begin{enumerate}[\rm$\centerdot$]
\item
$(R,\mu)$ is a local Noetherian ring of dimension $d$,
\item $n\ge\max(d+2,3)$,
\item $M$ is a positive, affine, normal monoid with $\rank(M)\ge2$, 
\item $M=M(\Delta)\cup M(\Gamma)$ is a pyramidal decomposition with vertex $m$,
\item $(\mathcal H,\Delta_1,\Delta_2)$ is an admissible configuration for the pair $(M,m)$,
\item $\mathfrak m\subset R[M(\Gamma)]$ is the maximal ideal generated by $\left(M(\Gamma)\setminus\{1\}\right)\cup\mu$.
\end{enumerate}

Consider the following commutative diagram of rings, where the vertical arrows refer to the identity embeddings:
\begin{equation}
\label{two-pullback}
\xymatrix{
A_2\ar[dddd]\ar[rrrr]&&&&R[M]\ar[dddd]^{\bigcap}\\
&A_1\ar[ul]\ar[rr]\ar[dd]&&R[M]\ar@{=}[ur]\ar[dd]^{\bigcap}&\\
\\
&R[\ZZ_+^r]\ar[dl]\ar[rr]_{\pi_1}&&R[M(\Delta_1)]\ar[dr]&\\
R[\ZZ_+^r]\ar[rrrr]_{\pi_2}&&&&R[M(\Delta_2)]
}
\end{equation}
where:
\begin{enumerate}[\rm$\centerdot$]
\item $r=\#\Hilb(M(\Delta_1)=\#\Hilb(M(\Delta_2)$ (see Section \ref{Monoids} for Hilbert bases),
\item the $R$-algebra homomorphisms $\pi_i$ ($i=1,2$) are induced by surjective monoid homomorphisms $h_i:\ZZ_+^r\to M(\Delta_i)$, satisfying the conditions: $h_1(t_1)=h_2(t_1)=m$ and the following triples of points are collinear
$$
\Phi(m),\Phi(h_1(t_j)),\Phi(h_2(t_j))\in\mathcal H,\qquad j=2,\ldots,r,
$$
\item the inner and outer squares are pull-back diagrams,
\item the slanted arrows are the induced rings embeddings.
\end{enumerate} 

We will keep the identification $R[\ZZ_+^r]=R[t_1,\ldots,t_r]$ at the lower-left corner of the outer square, and think of $\ZZ_+^r$ at the lower-left corner of the inner square as the multiplicative monoid, generated by $t_1,t_1^{k_2}t_2,\ldots,t_1^{k_r}t_r$ for appropriate $k_2,\ldots,k_r\in\NN$ so that the slanted arrows in $(\ref{two-pullback})$ become the identity embeddings.  In particular, the upper maps in the squares are the restrictions of the respective bottom maps and so we use the same notation $\pi_i$ for them.

Denote $\ZZ_+^r(M)=h_2^{-1}(M)$. Then $A_2=R\left[\ZZ_+^r(M)+\ker\pi_2\right]\subset R[\ZZ_+^r]$, the $R$-subalgebra generated by $\ZZ_+^r(M)\cup\ker\pi_2$.

Observe that the subalgebra $A_2\subset R[\ZZ_+^r]$ is $t_1$-tilted. It is \emph{neither} a finitely generated \emph{nor} a monomial $R$-algebra as soon as $\ker\pi_2\not=0$, i.e., when $M(\Delta_2)$ or, equivalently, $M(\Delta_1)$ is not a free monoid. 

\begin{theorem}\label{descentlocal}
Assume the elementary action on $\Um_n(R[M(\Delta_1)])$ is transitive. Then $\bf_{\mathfrak m}\underset{R[M]_\mathfrak m}\sim\be$ for every $\bf\in\Um_n(R[M])$. 
\end{theorem}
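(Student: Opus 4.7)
I would prove Theorem $\ref{descentlocal}$ by a pyramidal descent through the nested pull-back squares in $(\ref{two-pullback})$: lift $\bf$ upward using Milnor patching, normalize the lifted row via the quasi-monic corollary, and conclude by combining the resulting monicity in $R[M]$ with Karoubi patching at $\mathfrak m$.

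\emph{Lifting and normalization.} By hypothesis the image of $\bf$ in $R[M(\Delta_1)]$ is $\underset{R[M(\Delta_1)]}{\sim}\be$. Since $\pi_1$ is surjective, Lemma $\ref{Milnor}$(a) applied to the inner square of $(\ref{two-pullback})$ yields $\bg\in\Um_n(A_1)$ whose image in $R[M]$ is $\underset{R[M]}{\sim}\bf$. Via $A_1\hookrightarrow A_2$ I view $\bg$ in $\Um_n(A_2)$. Because $A_1$ sits inside the inner polynomial ring $R[t_1,t_1t_2^{k_2},\ldots,t_1t_r^{k_r}]$, every nontrivial monomial of $\bg$ is divisible by $t_1$; hence $\bg|_{t_1=0}$ equals the $R$-augmentation of $\bg$ and lies in $\Um_n(R)$. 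Since $A_2\subset R[\ZZ_+^r]$ is $t_1$-tilted, Corollary $\ref{quasimonic}$ then provides $\bh=(h_1,\ldots,h_n)\in\Um_n(A_2)$ with $\bg\underset{A_2}{\sim}\bh$ and $h_n$ quasi-monic.

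\emph{Monicity and conclusion.} Pushing $\bh$ through $A_2\to R[M]$ yields $\bar\bh\underset{R[M]}{\sim}\bf$ with $\bar h_n=\pi_2(h_n)\in R[M]$. Because $h_2(t_1)=m$ has positive pyramidal degree while the generators $h_2(t_j)$, $j\ge 2$, lie on the facet $F_2$ with $F_2\cap\Phi(M)=\emptyset$ and therefore have non-positive pyramidal degree, a repeated application of Lemma $\ref{highest-degree}$ --- rewriting each non-leading monomial as a combination involving a large power of $m$ --- allows one to ensure that the pyramidally-top term of $\bar h_n$ is a single $R$-unit multiple of a power of $m$, making $\bar h_n$ monic in the sense of Lemma $\ref{integral-extension}$. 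Lemma $\ref{integral-extension}$ now renders $R[M(\Gamma)]\to R[M]/\bar h_n R[M]$ integral, so $(R[M]/\bar h_n R[M])_\mathfrak m$ is a semi-local module-finite extension of the local ring $R[M(\Gamma)]_\mathfrak m$. A Karoubi square (Lemma $\ref{Milnor}$(b)) relating $R[M]_\mathfrak m$, the localization $R[M]_\mathfrak m[\bar h_n^{-1}]$ (where $\bar h_n$ is a unit and $\bar\bh$ trivializes to $\be$), and the semi-local quotient by $\bar h_n$ (where stable-range transitivity gives $\be$) then patches to give $\bar\bh_\mathfrak m\underset{R[M]_\mathfrak m}{\sim}\be$.

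\emph{Main obstacle.} The crucial difficulty is the passage from the lex-based quasi-monic condition in $R[\ZZ_+^r]$ to the pyramidal-degree monic condition in $R[M]$ --- the two orderings are a priori incompatible, and reconciling them relies on careful use of Lemma $\ref{highest-degree}$ together with the precise geometry of the admissible configuration $(\mathcal H,\Delta_1,\Delta_2)$. A secondary but significant hurdle is properly setting up the concluding Karoubi square and controlling the dimension and stable-range data of the semi-local piece $(R[M]/\bar h_n R[M])_\mathfrak m$.
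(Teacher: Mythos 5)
Your lifting and normalization steps coincide with the paper's own argument: Milnor patching over the inner square of (\ref{two-pullback}) produces $\bg\in\Um_n(A_1)$ with $\pi_1(\bg)=\bf$, viewing $\bg$ inside $A_2$ gives $\bg|_{t_1=0}\in\Um_n(R)$, and Corollary \ref{quasimonic} makes the last entry quasi-monic. The genuine gap is your ``Monicity and conclusion'' step. Once $\bh$ is fixed, its image $\bar h_n=\pi_2(h_n)$ is a \emph{fixed} element of $R[M]$; the only admissible moves are elementary transformations over $R[M]$ (adding multiples of the other components) or ring endomorphisms of $R[M]$ --- and the scarcity of endomorphisms of monoid rings is precisely the obstruction this paper is built to circumvent. ``Repeated application of Lemma \ref{highest-degree}, rewriting each non-leading monomial'' is not an operation on a ring element at all, so a lex-quasi-monic $h_n$ gives you no pyramidally monic element in the elementary orbit of $\bf$. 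What the paper actually does, and what is entirely absent from your proposal, is: (i) use Theorem \ref{Suslin-main} to realize $\bg$ as the first row of some $\epsilon\in\E_n(R[\ZZ_+^r])$; (ii) introduce the polynomial-ring endomorphism $\tau(t_j)=t_j+t_1^{c_j}$; (iii) observe that $\alpha=\epsilon^{-1}\tau(\epsilon)$ lies in $\GL_n(R[\ZZ_+^r(\Delta_0)])\subset\GL_n(A_2)$ for $c_1,\ldots,c_r\gg0$, because the substitution aligns all monomials toward $t_1$; (iv) use Lemma \ref{Normal} and Theorem \ref{Suslin-main} over $R[\ZZ_+^r(\Delta_0)]$ to split $\alpha$ into a block-diagonal matrix times an elementary one, so that $\bg$ is elementarily equivalent over $A_2$ to a row whose last entry is $\tau(g_n)$. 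Lemma \ref{highest-degree} exists exactly to analyze this $\tau$: the decomposition $d_{ij}=a_{ij}+b_{ij}$ encodes the expansion of $\prod_j(t_j+t_1^{c_j})^{d_{ij}}$, and the lemma yields that $\pi(\tau(g_n))$ is monic for $c_1\gg\cdots\gg c_r\gg0$. Without (i)--(iv) there is no monic element available, and your argument collapses at the very point you yourself flag as ``the crucial difficulty.''

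A secondary error: your concluding square is not a Karoubi square. The configuration consisting of a ring, its localization at $\bar h_n$, and its quotient by $\bar h_n$ does not satisfy the defining condition $A/sA\cong B/\rho(s)B$, so Lemma \ref{Milnor}(b) does not apply. The paper's conclusion is more elementary: monicity plus Lemma \ref{integral-extension} make $R[M(\Gamma)]_{\mathfrak m}\to R[M]_{\mathfrak m}/(\pi(\tau(g_n)))$ integral, hence the quotient is semi-local; unimodular rows over semi-local rings reduce to $\be$ by \cite[Ch.~I, Cor.~7.5]{Lam}, and the elementary transformations lift along the surjection $R[M]_{\mathfrak m}\to R[M]_{\mathfrak m}/(\pi(\tau(g_n)))$ by \cite[Ch.~I, Prop.~5.6]{Lam}, giving $\bf_{\mathfrak m}\underset{R[M]_{\mathfrak m}}\sim\be$.
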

\noindent(Here $R[M]_\mathfrak m=(R[M(\Gamma)]\setminus\mathfrak m)^{-1}R[M]$.)

\begin{proof}
Let $\bf=(f_1,\ldots,f_n)\in\Um_n(R[M])$. Since $\bf\underset{R[M(\Delta_1)]}\sim\be$, Lemma \ref{Milnor} implies the existence of $\bg=(g_1,\ldots,g_n)\in\Um_n(A_1)$ such that $\pi_1(\bg)\underset{R[M]}\sim\bf$. We can assume $\pi_1(\bg)=\bf$. 

Considering $\bg$ as an element of $\Um_n(A_2)$, we have $\bg|_{t_1=0}\in\Um_n(R)$. (This is where we use admissible configurations: the corresponding condition may not be satisfied over $A_1$.) By Corollary \ref{quasimonic}, without loss of generality we can assume that
\begin{equation}\label{preliminary-quasimonic}
g_n\ \text{is quasi-monic, not in}\ R.
\end{equation}

Fix a rational subsimplex $\Delta_0\subset\Phi(\ZZ_+(M))$ such that $\Phi(\ZZ_+^d)$ is tangent to $\Delta_0$ at $\Phi(t_1)$ and $\ZZ_+^r(\Delta_0)\cong\ZZ_+^r$. This can be done by choosing
$$
\Delta_0=\conv\left(\Phi(t_1),\Phi(t_1^kt_2),\ldots,\Phi(t_1^kt_d)\right).
$$
for some $k\gg0$. We have $\ZZ_+^r(\Delta_0)\subset\ZZ_+^r(M)$.

By Theorem \ref{Suslin-main}, there exists $\epsilon\in \E_n(R[\ZZ_+^r])$, whose first row is $\bg$.

For an $r$-tuple of natural numbers $(c_1,\ldots,c_r)$, consider the $R$-algebra endomorphism 
\begin{align*}
\tau=\tau(c_1,\ldots,c_r):R[\ZZ_+^r]\longrightarrow R[\ZZ_+^r],\quad \tau(t_j)=t_j+t_1^{c_j},\qquad j=1,\ldots,r.
\end{align*}

The crucial observation is that the non-unit monomials in the reduced forms of the entries of the matrix $\epsilon^{-1}\tau(\epsilon)$ align in the direction of $t_1\in\ZZ_+^d\subset\RR_+^d$ as $c_1,\ldots,c_d\to\infty$. This implies
\begin{align*}
\alpha:=\epsilon^{-1}\tau(\epsilon)\in \GL_n(R[\ZZ_+^d(\Delta_0)]\subset\GL_n(A_2)\ \ \text{for}\ \ c_1,\ldots,c_r\gg0.
\end{align*}

By Lemma \ref{Normal} and Theorem \ref{Suslin-main}, there exist $\epsilon_0\in \E_n(R[\ZZ_+^r(\Delta_0)]$ and $\alpha_0\in \GL_{n-1}(R[\ZZ_+^r(\Delta_0)])$, such that
\begin{align*}
\alpha=
\epsilon_0\begin{pmatrix}
\alpha_0&0\\
0&1\\
\end{pmatrix}.
\end{align*}
Thus, for $c_1,\ldots,c_r\gg0$, we have
\begin{align*}
(h_1,\ldots,h_{n-1},\tau(g_n)):=
\tau(\bg)\cdot
\begin{pmatrix}
\alpha_0^{-1}&0\\
0&1\\
\end{pmatrix}&=
\bg\cdot\alpha\cdot
\begin{pmatrix}
\alpha_0^{-1}&0\\
0&1\\
\end{pmatrix}\\
&=
\bg\cdot\epsilon_0\underset{A_2}\sim\bg
\in\Um_n(A_2).
\end{align*}
Consequently, for every $c_1,\ldots,c_r\gg0$, there exist $f_1',\ldots,f_{n-1}'\in R[M]$ such that 
$$
\bf\underset{R[M]}\sim(f_1',\ldots,f_{n-1}',\pi_2(\tau(g_n))\in\Um_n(R[M])
$$ 
($\pi_2$ is the map from (\ref{two-pullback})). 
Pick a pyramidal degree $\deg:M\to\ZZ$, associated to the decomposition $M=M(\Delta)\cup M(\Gamma)$. Because of (\ref{preliminary-quasimonic}), Lemma \ref{highest-degree} implies that $\pi_2(\tau(g_n))$ is monic for $c_1\gg\ldots\gg c_1\gg0$. Then, by Lemma \ref{integral-extension}, the homomorphism $R[M(\Gamma)]_{\mathfrak m}\to R[M]_{\mathfrak m}/(\pi_2(\tau(g_n))$ is integral. Since $R[M]$ is finitely generated over $R$ and $R[M(\Gamma)]_{\mathfrak m}$ is local, the quotient ring $R[M]_{\mathfrak m}/(\pi(\tau(g_n))$ is semi-local. Consequently, \cite[Corollary 7.5]{Lam} implies the equivalence $\bar\bf_{\mathfrak m}\sim\be$ over the ring $R[M]_{\mathfrak m}/(\pi_2(\tau(g_n))$, where the `bar\rq{} refers to the reduction $\mod(\pi_2(\tau(g_n))$, i.e., 
$\bar\bf_{\mathfrak m}\in\Um_{n-1}\big(R[M]_{\mathfrak m}/(\pi_2(\tau(g_n))\big)$.
By lifting the involved elementary transformation to $R[M]_{\mathfrak m}$, one derives the desired equivalence $\bf_{\mathfrak m}\underset{R[M]_{\mathfrak m}}\sim\be$; see Proposition 5.6 \cite[Ch. I]{Lam}.
\end{proof}

\subsection{Pyramidal descent and Theorem \ref{main} for torsion free monoids}\label{Global-pyramidal} We need further facts on unimodular rows and polytopes.

The following lemma is derived in the proof of \cite[Proposition 10.3]{Swan}. It represents the second use of non-finitely generated algebras (after Section \ref{Quasimonic}).

\begin{lemma}\label{pyramidal-karoubi}
Let $M$ be an affine positive normal monoid and $M=M(\Delta)\cup M(\Gamma)$ be a pyramidal decomposition. Then for a local ring $(R,\mu)$ and the maximal ideal $\mathfrak m=R\inte(M(\Gamma))+\mu\subset R[M(\Gamma)_*]$, the following diagram is a Karoubi square:
$$
\xymatrix{
R[M(\Gamma)_*]\ar[rr]\ar[dd]&&R[M_*]\ar[dd]\\
\\
R[M(\Gamma)_*]_{\mathfrak m}\ar[rr]&&R[M_*]_{\mathfrak m}
}
$$
\end{lemma}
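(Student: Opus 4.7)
The strategy is to check directly the three conditions defining a Karoubi square with $A=R[M(\Gamma)_*]$, $B=R[M_*]$, $\rho\colon A\hookrightarrow B$ the inclusion induced by $M(\Gamma)_*\subset M_*$, and multiplicative set $S=A\setminus\mathfrak m$. One must verify: (i) $S$ acts regularly on $A$, (ii) $\rho(S)$ acts regularly on $B$, and (iii) the map $A/sA\to B/\rho(s)B$ is an isomorphism for every $s\in S$.

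For the regularity conditions (i) and (ii), I would use the positive grading on $R[M]$ from Lemma~\ref{Gordan}(b), which restricts to both $A$ and $B$, making each a positively graded $R$-algebra that is $R$-free in every degree with $R$ sitting in degree zero. Any $s\in S$ writes as $s=u+\sigma$ with $u\in R\setminus\mu$ a unit and $\sigma\in R\inte(M(\Gamma))$ concentrated in strictly positive degree. A lowest-degree argument on a hypothetical annihilator shows that such an element is a non-zero-divisor in both rings.

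The main content is condition (iii). The cokernel of $\rho$ as an $R$-module is spanned by monomials $x^n$ with $n\in\inte(M)\setminus\inte(M(\Gamma))$, i.e., lattice points whose $\Phi$-image lies in $\inte(\Phi(M))$ but on the $\Delta$-side of the decomposition. The key geometric fact, which I would establish first, is that for each such $n$ there exists $\tau\in\inte(M(\Gamma))$ with $n+\tau\in\inte(M(\Gamma))$: since $\Phi(n+\tau)$ is a convex combination of $\Phi(n)\in\Delta$ and $\Phi(\tau)\in\inte(\Gamma)$, choosing $\tau$ of sufficient weight forces the combination into $\inte(\Gamma)$. Using $s=u+\sigma$, one derives the relation $x^n\equiv -u^{-1}\sigma x^n\pmod{sB}$; expanding the right side gives a finite sum of monomials $x^{m_i+n}$ with $m_i\in\inte(M(\Gamma))$, some of which land in $A$ while others remain on the $\Delta$-side but at strictly higher degree. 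Iterating and tracking the grading yields surjectivity of $A/sA\to B/sB$; injectivity follows by a symmetric lowest-degree argument using the unit $u$ in $s$.

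The main obstacle is termination of this reduction: $B=R[M_*]$ is not a finitely generated $R$-algebra once $\rank(M)\geq 2$, since $M_*$ fails to be a finitely generated monoid, so standard Noetherian tools such as Artin--Rees do not apply directly to $B$. The resolution, carried out explicitly in the proof of \cite[Proposition~10.3]{Swan}, is to execute the reduction one graded piece at a time: within each fixed total degree of $R[M]$ only finitely many monomials of $M_*$ appear, and each reduction step strictly raises the grading, so the potentially infinite iteration resolves each graded component after finitely many steps.
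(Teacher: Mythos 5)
Your overall plan is the right one, and it is essentially the argument the paper points to: the paper gives no self-contained proof of this lemma, but extracts it from the proof of \cite[Proposition 10.3]{Swan}, which proceeds exactly by verifying the three Karoubi conditions for $A=R[M(\Gamma)_*]$, $B=R[M_*]$, $S=A\setminus\mathfrak m$. Your identification $s=u+\sigma$ with $u\in U(R)$ and $\sigma\in R\inte(M(\Gamma))$, the lowest-degree argument (in the grading of Lemma \ref{Gordan}(b)) for regularity of $s$ on $A$ and $B$, the description of the cokernel of $\rho$ by monomials on the $\Delta$-side, the congruence $x^n\equiv -u^{-1}\sigma x^n \pmod{sB}$, and the lowest-degree argument for injectivity (if $sb\in A$, split $b=b'+b''$ into good and bad monomials; then $sb''\in A$, but its lowest homogeneous component is $u$ times a nonzero combination of bad monomials) are all correct.

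The genuine gap is at the step you yourself flag as the main obstacle: termination. Your proposed resolution --- ``each reduction step strictly raises the grading, and each fixed degree contains only finitely many monomials, so each graded component is resolved after finitely many steps'' --- is a non sequitur. The quotient $B/(A+sB)$ is not graded, since $s$ is inhomogeneous, so there are no graded components of it to resolve; what the iteration actually shows is that the class of $x^n$ in $B/(A+sB)$ is represented by $(-u^{-1}\sigma)^k x^n$, whose lowest degree tends to infinity with $k$. In the non-Noetherian ring $B$ (you correctly note Artin--Rees is unavailable) this proves nothing by itself: one would need $\bigcap_k \Im\big(B_{\ge k}\to B/(A+sB)\big)=0$, which is precisely a Krull-intersection-type statement that is not automatic here. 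If for every $k$ some monomial of $\sigma^k x^n$ stayed on the $\Delta$-side, the reduction would never terminate. What closes the argument --- and this is the actual content of Swan's proof, as well as the only place where non-degeneracy of the decomposition enters --- is a uniform version of the ``key geometric fact'' you state but then never use: every monomial of $\sigma^k x^n$ has the form $n+m_{i_1}+\cdots+m_{i_k}$ with the $m_{i_j}$ among the (finitely many) monomials of $\sigma$, and its $\Phi$-image is the convex combination of $\Phi(n)$, with fixed weight $\lambda(n)$, and of a point of the compact set $K=\conv\big(\Phi(m_i)\big)\subset\inte(\Gamma)$, with weight at least $k\cdot\min_i\lambda(m_i)\to\infty$, where $\lambda$ is the linear form defining $\mathcal H$. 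Since $\dim\Gamma=\dim\Phi(M)$, the set $\inte(\Gamma)$ is open in $\Aff(\Phi(M))$ and contains an $\epsilon$-neighborhood of $K$; hence for all $k\gg0$, uniformly in the choice of the $m_{i_j}$, every monomial of $\sigma^k x^n$ lies in $M(\Gamma)_*$, i.e.\ $\sigma^k x^n\in A$ and the iteration stops after finitely many steps. With that substitution for your termination step, the proof is complete.
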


A sequence of rational polytopes $\{P_i\}_{i=1}^\infty$ is called \emph{admissible} if, for every $i$, either $P_{i+1}=\Gamma_i$, where $P_i=\Delta_i\cup \Gamma_i$ as for the $\Phi$-images of pyramidal decompositions, or $P_i\subset P_{i+1}\subset P_1$. The following is \cite[Lemma 2.8]{Anderson}:

\begin{lemma}\label{homothety}
For a rational polytope $P_1$ and a neighborhood $\mathcal U\subset P_1$, there exists an admissible sequence of polytopes $\{P_i\}_{i=1}^\infty$ with $P_i\subset\mathcal U$ for $i\gg0$.
\end{lemma}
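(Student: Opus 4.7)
The plan is to construct the admissible sequence using only the pyramidal shrinking move (option~(a) in the definition), obtaining a decreasing chain $P_1\supsetneq P_2\supsetneq\cdots$ whose polytopes eventually lie inside $\mathcal{U}$. First I would reduce to the case where $\mathcal{U}$ contains a ball $B(p,\epsilon)\cap P_1$ around a rational interior point $p\in\inte(P_1)$. Since each $P_i$ is the convex hull of its vertices and $B(p,\epsilon)$ is convex, it then suffices to drive all vertices of the $P_i$ into $B(p,\epsilon)$.

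The core local step is the following: for any rational polytope $Q\subseteq P_1$ with $p\in\inte(Q)$ and any vertex $v$ of $Q$, there is a rational affine hyperplane $H$ that meets each edge $vu_j$ of $Q$ strictly between $v$ and the adjacent vertex $u_j$. This yields a non-degenerate pyramidal decomposition $Q=\Delta\cup\Gamma$ with apex $v$ (taking $\Delta=Q\cap H^+$, the half-space side containing $v$), tangent to $Q$ at $v$ by construction, since near $v$ the set $\Delta$ agrees with $Q$. By choosing $H$ so that it stays close to (but does not cross) $p$, each new vertex of $\Gamma$ on $H$ lies at distance from $p$ at most a fixed fraction $\lambda<1$ of $\|v-p\|$. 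I would then iterate this step in \emph{rounds}: within a round, cut every current vertex exactly once, in decreasing order of distance from $p$. After each round the maximum vertex-distance from $p$ shrinks by at least the factor $\lambda$, and after $O(\log(1/\epsilon))$ rounds this maximum drops below $\epsilon$, giving $P_i\subseteq B(p,\epsilon)\subseteq\mathcal{U}$.

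The main obstacle is justifying the uniform contraction factor $\lambda<1$ across successive rounds, since the combinatorial structure of the polytopes evolves as new vertices appear at each cut. I expect this to be handled by the observation that $p$ remains at distance at least some fixed $\delta>0$ from every cut hyperplane (we never cut through a neighborhood of $p$), so the minimum distance from $p$ to the boundary of each $P_i$ is bounded below uniformly. This yields a uniform lower bound on the geometric quantities controlling $\lambda$, independent of the round, and closes the argument without ever invoking the expansion move~(b) of the admissibility definition.
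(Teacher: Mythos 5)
Your construction cannot prove the lemma, and the obstruction is provable rather than a repairable gap in the estimates. Call a cut \emph{strict} if, as in your scheme, the base hyperplane $H$ meets every edge of the current polytope at the apex $v$ strictly between its endpoints (and all other vertices lie in $H^-$). Suppose $\dim P_1\ge 2$, fix a facet hyperplane $\Pi$ of $P_1$, and consider the property: \emph{$P_i\cap\Pi$ is a face of $P_i$ of dimension $\ge 1$}. This property holds for $P_1$ and is preserved by \emph{every} strict cut and also by every expansion move $P_i\subset P_{i+1}\subset P_1$. Indeed, for an expansion, $P_{i+1}\subset P_1\subset\Pi^-$ and $P_{i+1}\cap\Pi\supset P_i\cap\Pi$; for a strict cut at $v$ with hyperplane $H$, put $F=P_i\cap\Pi$: if $v\notin F$ then all vertices of $F$ lie strictly in $H^-$ and $F$ survives unchanged, while if $v\in F$ then any edge $[v,w]$ of $F$ is an edge of $P_i$, hence is crossed by $H$ strictly between $v$ and $w$, so $P_{i+1}\cap\Pi=F\cap H^-$ still contains a segment of positive length. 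Consequently every polytope reachable by your moves meets \emph{every} facet hyperplane of $P_1$, hence is never contained in a small ball $B(p,\epsilon)$ around an interior point. In particular your key quantitative claim must fail, and indeed it does: a new vertex lies on an edge $[v,u_j]$, and its distance to $p$ is governed by the far endpoint $u_j$, not by $\|v-p\|$; moreover, as vertices accumulate along the facets of $P_1$, the admissible strict cuts are forced to preserve those contacts. Concretely, after cutting $(1,1)$ off the square $[-1,1]^2$ with the line $x+y=\delta$, any strict cut at $(1,-1)$ must cross the right edge between $(1,-1)$ and the new vertex $(1,\delta-1)$, so an edge on the line $x=1$ survives forever; your "rounds" can never remove it.

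The idea you are missing is that Definition \ref{pyramidal-valuation} does \emph{not} require the base of the pyramid $\Delta$ to avoid the other vertices: the base hyperplane may pass through them. For example, cutting the square $[-1,1]^2$ at $(1,1)$ along the diagonal through $(1,-1)$ and $(-1,1)$ is a legitimate non-degenerate pyramidal decomposition (the corner cone condition and the pyramid condition both hold), and it collapses the two edge contacts with the lines $x=1$ and $y=1$ to single points --- something no strict cut can do. Such through-the-vertex cuts, used together with the expansion move, which you explicitly renounce, are exactly what allows the polytope to be peeled away from the facets of $P_1$ and shrunk into $\mathcal U$. Note finally that the paper itself does not prove Lemma \ref{homothety}; it quotes it from \cite{Anderson}, so any proof supplied here must be self-contained and, by the argument above, must go beyond strict vertex truncation.
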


For an affine positive normal monoid $M$ of rank $r$ we define its \emph{complexity} as the smallest integer $\kk(M)$ for which there exists a polytope $Q\subset\Phi(M)$ of dimension $\kk(M)-1$ and points $v_1,\ldots,v_{r-\kk(M)}\in\Phi(M)$, satisfying the equality
\begin{equation}\label{multipyramid}
\Phi(M)=\conv(v_1,\ldots,v_{r-\kk(M)},Q).
\end{equation}

The condition (\ref{multipyramid}) is equivalent to the existence of a sequence of polytopes
$$
Q\subset Q_1\subset\ldots\subset Q_{r-\kk(M)}=\Phi(M),
$$
where $Q_i$ is a pyramid over $Q_{i-1}$ for each $i\ge1$. 

Observe that, for $M$ as above, $\kk(M)=0$ if and only if $\Phi(M)$ is a simplex. In particular, by the main result of \cite{Elrows}, Theorem \ref{main} is true for an affine positive normal monoid of complexity $0$. 

For the rest of Section \ref{Global-pyramidal} we assume that $R$, $d$, and $n$ are as in Theorem \ref{main} and that $R=(R,\mu)$ is local. We also assume that $M$ is an affine positive normal monoid with $\rank(M)=r$. 

Let $\Phi(M)=\conv(v_1,\ldots,v_{r-\mathsf k(M)},Q)$, as in $(\ref{multipyramid})$. For a rational polytope $P\subset Q$, we will use the notation
$$
\tilde P=\conv(v_1,\ldots,v_{r-\mathsf k(M)},P).
$$

Observe that every pyramidal decomposition $M(P)=M(\Delta)\cup M(\Gamma)$ gives rise to the pyramidal decomposition $M(\tilde P)=M(\tilde\Delta)\cap M(\tilde\Gamma)$.

The proof of the following lemma is straightforward.

\begin{lemma}\label{complexity}
Let $P\subset Q$ be a rational polytope and $m\in M(\tilde P)$ be the extremal generator, corresponding to a vertex of $P$. Assume $(\mathcal H,\Delta_1,\Delta_2)$ is an admissible configuration for the monoid $M(\tilde P)$ with respect to $m$. Then $\kk\big(M(\Delta_1)\big)=\kk(M(\tilde\Delta_1))<\kk(M)$.
\end{lemma}

\begin{proposition}\label{global-descent}\emph{(Pyramidal descent)}  Assume Theorem \ref{main} has been shown for the monoid rings of the form $R[N]$, where $N$ is an affine positive normal monoid with $\kk(N)<\kk(M)$. Then for a rational polytope $P\subset Q$, a pyramidal decomposition $M(P)=M(\Delta)\cup M(\Gamma)$, and a unimodular row $\bf\in\Um_n(R[M(\tilde P)_*]$, there exists $\bg\in\Um_n(R[M(\tilde\Gamma)_*])$, such that
$\bf\sim_{R[M(\tilde P)_*]}\bg$.
\end{proposition}

\begin{proof} For a point $z\in\Aff(\Phi(M))$ and a real number $c$,  the homothetic transformation of $\Aff(\Phi(M))$ with coefficient $c$ and centered at $z$ will be denoted by $c^z$. 

Pick any rational point $z\in\inte(\tilde\Gamma)$ and a rational number $c\in(0,1)$, sufficiently close to $1$, such that $\bf\in\Um_n\big(R[M(c^z(\tilde P))]\big)$. We have the pyramidal decomposition
$$
M\big(c^z(\tilde P)\big)=M\big(c^z(\tilde\Delta)\big)\cup M\big(c^z(\tilde\Gamma)\big).
$$
By Lemma \ref{complexity} and the assumption on monoids of complexity $<\kk(M)$, Theorem \ref{descentlocal} applies to this pyramidal decomposition, yielding the equivalence 
$$
\bf_{\mathfrak n}\sim_{R[M(c^z(\tilde P))]_{\mathfrak n}}\be,
$$
where $\mathfrak n=R(M(c^z(\tilde\Gamma))\setminus\{1\})+\mu$. We have the inclusion
\begin{align*}
{R[M(c^z(\tilde P))]_{\mathfrak n}}\subset R[M(\tilde P)_*]_{\mathfrak m},
\end{align*}
where $\mathfrak m=R(M(\tilde\Gamma)\setminus\{1\})+\mu$. In particular, Lemma \ref{pyramidal-karoubi} implies 
$\bf\underset{R[M_*]}\sim\bg$ for some $\bg\in\Um_n(R[M(\tilde\Gamma)_*])$.
\end{proof}

\begin{proof}[Proof of Theorem \ref{main} for torsion free monoids]
By Lemma \ref{reduction}(a), it is enough to show the desired transitivity for the monoid ring $R[M_*]$. We will induct on $\mathsf k(M)$.

If $\kk(M)=0$ then $\Phi(M)$ is a simplex. Consequently, $M_*$ is a filtered union of affine positive normal monoids $L$, for which the polytopes $\Phi(L)$ are simplices, and we are done by \cite{Elrows}. Assume $\kk(M)>0$ and the transitivity has been shown for the rings of the form $R[N_*]$, where $N$ is an affine positive normal monoid with $\kk(N)<\kk(M)$. Because, for every face $F\subset\Phi(N)$,  $\kk(N(F))\le\kk(N)$ and $N(F)$ is normal if $N$ is, Lemma \ref{reduction}(b) implies the transitivity also for the rings $R[N]$ with $N$ affine, positive, normal, and $\kk(N)<\kk(M)$. By Lemma \ref{homothety}(b), there exists an admissible sequence of rational polytopes $\{P_i\}_{i=1}^\infty$, such that $P_1=Q$ and $P_j$ is a rational simplex for some $j$. By Proposition \ref{global-descent}, there exists an element $\bh\in\Um_n(R[M(\tilde P_j)]_*)$ with $\bh\underset{R[M_*]}\sim\bf$. But, by the base case, $\bh\sim_{R[M(\tilde P_j)_*]}\be$.
\end{proof}

\begin{remark}\label{Old-complexity}
Let $M$ be an affine positive normal monoid and  $(\mathcal H,\Delta_1,\Delta_2)$ an admissible configuration with respect to an extremal generator $m\in M$. Not only is the polytope $\Delta_1$ closer to a simplex compared to the polytope $\Phi(M)$, making the induction in the concluding part of the proof of Theorem \ref{main} possible, but even the monoid $M(\Delta_1)$ itself is closer to a free monoid -- it splits off a free monoid summand. Based on this observation, one can define a subtler induction process where the base case is the case of polynomial algebras. This way one can avoid the use of \cite{Elrows} and base the whole argument on Suslin\rq{}s work \cite{Suslin-Linear}. The referee found a mistake in our original attempt at inducting on the complexity of monoid structures and explained how to fix it. This would also require an extension of Lemma \ref{pyramidal-karoubi}. Proposition \ref{global-descent} and the induction at the end  of the proof of Theorem \ref{main} is a variation: it requires no changes in Lemma \ref{pyramidal-karoubi} but, on the other hand, uses \cite{Elrows}. This approach is also similar to the proof of \cite[Proposition 3.2]{Gubel-Nil}.
\end{remark}

\section{Monoids with torsion}\label{Torsion}

Unless specified otherwise, a monoid in this section means a commutative and cancellative monoid, possibly with torsion, i.e., no longer is the group $\gp(M)$ assumed to be torsion free.

For a monoid $L$, let $\t(L)$ denote the the torsion subgroup of $\gp(L)$. For a monoid $L$, denote $\bar L=\Im\big(L\to\gp(L)/\t(L)\big)$. The correspondence $L\mapsto\bar L$ is a left adjoint functor for the embedding functor of the category of monoids without torsion into that of monoids.

The \emph{normalization} $\n(L)$ and  \emph{seminormalization} $\sn(L)$ of a monoid $L$ is defined in the same way as for the class of torsion free monoids in Section \ref{Normal-and-seminormal}.

For a finitely generated monoid $L$, we will identify $\gp(L)$ with $\gp(\bar L)\times\t(L)$ along some isomorphism $\gp(L)\cong\gp(\bar L)\times\t(L)$. (There is no canonical choice for such an isomorphism.) As before, we think of $\gp(\bar L)$ as $\ZZ^r$, where $r=\rank(\gp(L))$.

We need several facts, starting with an extension of Lemma \ref{localization-seminormal}(b) to monoids with torsion.

\begin{lemma}\label{seminormal-torsion}
Let $L$ be a finitely generated monoid with trivial $U(L)$ and $\FF(L)$ be the set of faces of the cone $\RR_+\bar L$, including $0$ and $\RR_+\bar L$. For every $F\in\FF(L)$, there is a subgroup $T_F\subset\t(L)$, such that:
\begin{enumerate}[\rm(a)]
\item $\sn(L)=\bigcup_{\FF(L)}\big(\inte(\n(\bar L\cap F))\times T_F\big)$;
\item $T_{\RR_+\bar L}=\t(L)$;
\item $T_{F_1}\subset T_{F_2}$ whenever $F_1\subset F_2$.
\end{enumerate}
\end{lemma}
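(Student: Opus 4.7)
The plan is to set $L_F:=\pi^{-1}(\bar L\cap F)\subset L$, where $\pi:\gp(L)=\gp(\bar L)\times\t(L)\twoheadrightarrow\gp(\bar L)$ is the natural projection, and to define
$$
T_F:=\t(\gp(L_F))=\{s\in\t(L)\mid (0,s)\in\gp(L_F)\}\subset\t(L).
$$
With this definition, (b) is immediate since $L_{\RR_+\bar L}=L$, and (c) follows at once from $L_{F_1}\subset L_{F_2}$, which forces $\gp(L_{F_1})\subset\gp(L_{F_2})$ and hence the desired inclusion of torsion subgroups. The backbone of the argument for (a) will be the structural identity $\gp(L_F)=\gp(\bar L\cap F)\times T_F$ as subgroups of $\gp(\bar L)\times\t(L)$: the short exact sequence $0\to T_F\to\gp(L_F)\to\gp(\bar L\cap F)\to 0$ splits because the quotient is free abelian, and any section, composed with the second-coordinate projection onto $\t(L)/T_F$, must vanish since its source is torsion-free and its target is torsion. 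In particular every element of $L_F$ already has its $\t(L)$-component in $T_F$.

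For the inclusion $\sn(L)\subset\bigcup_F\bigl(\inte(\n(\bar L\cap F))\times T_F\bigr)$ I would take $(x,t)\in\sn(L)$, project to get $x\in\sn(\bar L)$, and apply the standard face decomposition $\sn(\bar L)=\bigsqcup_F\inte(\n(\bar L\cap F))$ for an affine positive monoid (a direct consequence of Lemma \ref{localization-seminormal}(b) applied to $\sn(\bar L)$) to place $x$ in the relative interior of a unique face. Then $(nx,nt)\in L_F$ for $n\gg 0$, and subtracting the $n$-th from the $(n{+}1)$-st multiple yields $(x,t)\in\gp(L_F)$, forcing $t\in T_F$.

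The reverse inclusion is the heart of the proof. Given $x\in\inte(\n(\bar L\cap F))$ and $t\in T_F$, I will produce a base point $a_\star\in\bar L\cap F$ whose $L_F$-fiber equals $\{a_\star\}\times T_F$ and then transport that fiber to $nx$. Choose generators $t_1,\ldots,t_k$ of the finite group $T_F$ of orders $n_1,\ldots,n_k$; since $(0,t_i)\in\gp(L_F)=L_F-L_F$, there exist pairs $(a_i,u_i),(a_i,v_i)\in L_F$ with $u_i-v_i=t_i$. Set $a_\star:=\sum_i n_ia_i$. For each tuple $(j_1,\ldots,j_k)\in\prod_i\{0,\ldots,n_i-1\}$, the element $\sum_i\bigl(j_i(a_i,u_i)+(n_i-j_i)(a_i,v_i)\bigr)\in L_F$ has first coordinate $a_\star$ and second coordinate $\sum_i n_iv_i+\sum_i j_it_i$. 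As the $(j_i)$ vary, $\sum_i j_it_i$ ranges over all of $T_F$, so the $a_\star$-fiber of $L_F$ contains a full coset of $T_F$; the structural identity from the first paragraph then forces this fiber to be exactly $\{a_\star\}\times T_F$.

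To transport the fiber to $nx$, I note that for $n\gg 0$ the point $nx$ is deep enough inside $\inte(F)$ that Lemma \ref{conductor}(a), applied to the affine positive monoid $\bar L\cap F$ (positivity inherited from $U(\bar L)=0$), delivers $nx-a_\star\in\bar L\cap F$. Any lift $(nx-a_\star,s')\in L_F$ automatically has $s'\in T_F$, and adding such a lift to the full $a_\star$-fiber produces $\{nx\}\times(s'+T_F)=\{nx\}\times T_F\subset L_F$; in particular $(nx,nt)\in L$, whence $(x,t)\in\sn(L)$. The main obstruction is precisely this fiber-filling step: the monoid $L$ itself admits no fiberwise torsion action -- only $\gp(L)$ does -- and the identity $\gp(L_F)=\gp(\bar L\cap F)\times T_F$ is the mechanism by which the group-level torsion datum $(0,t)\in\gp(L_F)$ is promoted to the monoid-level statement that every $T_F$-coset is realized in $L_F$ once the first coordinate enters the conductor of $\bar L\cap F$.
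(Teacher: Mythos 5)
Your proof has a genuine gap, and it sits exactly where you locate the heart of the argument: the ``structural identity'' $\gp(L_F)=\gp(\bar L\cap F)\times T_F$ \emph{as subgroups of} $\gp(\bar L)\times\t(L)$. The justification you give inverts a true principle: homomorphisms from \emph{torsion} groups to \emph{torsion-free} groups must vanish, but a homomorphism from a torsion-free group to a torsion group certainly need not (e.g. $\ZZ\to\ZZ_2$, $1\mapsto\bar 1$). The composite of your section with the projection onto $\t(L)/T_F$ is exactly such a map, and it can be nonzero. In fact the identity itself is false. Take $L\subset\ZZ^2\times\ZZ_2$ generated by $(1,0,\bar 0)$, $(0,1,\bar 1)$, $(1,1,\bar 0)$; then $\gp(L)=\ZZ^2\times\ZZ_2$, $\bar L=\ZZ_+^2$, and $U(L)=0$. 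For the face $F=\RR_+(0,1)$ one computes $L_F=\{(0,b,\bar b)\ |\ b\in\ZZ_+\}$, hence $\gp(L_F)=\{(0,b,\bar b)\ |\ b\in\ZZ\}$ and $T_F=\t(\gp(L_F))=0$, while $\gp(\bar L\cap F)\times T_F=\{(0,b,\bar 0)\ |\ b\in\ZZ\}$. Already your stated consequence that every element of $L_F$ has its $\t(L)$-component in $T_F$ fails for the generator $(0,1,\bar 1)$. (For the full face $F=\RR_+\bar L$ the identity is trivially true, which is why nothing seems amiss in simple cases.)

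The failure propagates through part (a): in this example, with your $T_F$, \emph{both} inclusions break. The element $(0,1,\bar 1)\in L\subset\sn(L)$ lies over $\inte(F)$ but belongs to none of your pieces, since your $F$-piece is $\{(0,q,\bar 0)\ |\ q\ge 1\}$ and the top-dimensional piece requires both free coordinates positive; conversely, $(0,1,\bar 0)$ lies in your $F$-piece but is not in $\sn(L)$, because $n(0,1,\bar 0)=(0,n,\bar 0)\notin L$ for every odd $n$. What your (otherwise sound) fiber-filling construction really proves is that, over points deep in $\inte(\n(\bar L\cap F))$, the fibers of $L_F\to\bar L\cap F$ are \emph{full cosets} of $T_F$; the example shows these cosets can be twisted by a nonzero character $\gp(\bar L\cap F)\to\t(L)/T_F$ (here $b\mapsto\bar b$), and such twists cannot in general be removed by re-choosing the identification $\gp(L)=\gp(\bar L)\times\t(L)$: for $L$ generated by $(1,0,\bar 1)$, $(1,2,\bar 0)$, $(1,1,\bar 0)$, the twists over the two extreme rays can never be trivialized simultaneously. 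So the product structure over a proper face has to be extracted from data intrinsic to $L_F$ --- equivalently, with respect to a splitting of $\gp(L_F)$ itself --- and not read off from the ambient coordinates, which is precisely what your argument attempts. The paper's route is different: it proves $\n(L)=\n(\bar L)\times\t(L)$ by integrality, extracts a conductor element with $(\lambda,\tau)+\n(L)\subset L$ from the module-finiteness of $\QQ[\n(L)]$ over $\QQ[L]$, deduces $\inte(\n(\bar L))\times\t(L)\subset\sn(L)$, and then repeats this argument face by face, defining $T_F$ directly as the set of torsion components occurring in $\sn(L)$ over $F$; the splitting of $\gp(L_F)$ used there need not be the restriction of the ambient one, which is the very point your proof misses.
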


\begin{proof}
The extension of monoids $L\subset\n(\bar L)\times\t(L)$ is \emph{integral}, i.e., every element of $\n(\bar L)\times\t(L)$ has a positive multiple in $L$. In particular, $\n(L)=\n(\bar L)\times\t(L)$. In fact, for any element $(l,t)\in\n(\bar L)\times\t(L)$, there exists $c_1\in\NN$ such that $c_1l\in\bar L$. Assume $(c_1l,s)\in L$ for some $s\in\t(L)$. Then $(c_1c_2)\cdot(l,t)=(c_1c_2l,0)\in L$ for any multiple $c_2$ of the orders of the elements $c_1t,s\in\t(L)$.

Since $\n(L)$ is a finitely generated monoid,  $\QQ[\n(L)]$ is a finitely genarated $\QQ[L]$-algebra. (Despite the use of monoid rings here, we continue using additive notation for the monoid operation.) But  $\QQ[L]\subset\QQ[\n(L)]$ is also an integral extension. Therefore, $\QQ[\n(L)]$ is module-finite over $\QQ[L]$. There is a finite generating subset of this module 
\begin{align*}
\{(l_{i1},t_{i1})-(l_{i2},t_{i2})\ |\ (l_{ij},t_{ij})\in L,\quad i=1,\ldots,k,\quad j=1,2\}\subset\n(L).
\end{align*}

We have $(\lambda,\tau)+\n(L)\subset L$, where $(\lambda,\tau)=\sum_{i=1}^k(l_{i2},t_{i2})\in L$. (In the torsion free case, we have just recovered Lemma \ref{conductor}(a).) Consequently, all sufficiently high multiples of every element $(t,l)\in\inte(\n(\bar L))\times\t(L)$ are in $(\lambda,\tau)+\n(L)$. Hence, $\inte(\n(\bar L))\times\t(L)\subset\sn(L)$.

The same argument, applied to any face $F\subset\RR_+\bar L$, yields
$$
\inte(\n(\bar L\cap F))\times T_F=\{(l,t)\in\sn(L)\ |\ l\in\bar L\cap F\}
$$
for the subgroup
$$
T_F=\{t\in\t(L)\ |\ (l,t)\in\sn(L)\ \text{for some}\ l\in\bar L\cap F\}\subset\t(L).
$$
This implies (a,b), and the part (c) follows from the inclusion $(l+l\rq{},t)\in\inte(\n(\bar L\cap F_2))\times T_{F_2}$ for any two elements 
$(l,t)\in\inte(\n(\bar L\cap F_1))\times T_{F_1}$ and $l\rq{}\in\inte(\n(\bar L\cap F_2))$.
\end{proof}

\begin{lemma}\label{unit-torsion}
In order to prove Theorem \ref{main}, it is enough to prove it when $U(M)$ is trivial. 
\end{lemma}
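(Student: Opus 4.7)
The plan is to imitate the first paragraph of the proof of Lemma \ref{reduction}, accounting for the possible torsion in $U(M)$. First I would reduce to $M$ finitely generated by the standard finite-support argument: a unimodular row together with any syzygy certifying unimodularity involves only finitely many monomials of $M$, so $\bf\in\Um_n(R[M])$ actually lies in $\Um_n(R[M_0])$ for some finitely generated submonoid $M_0\subset M$, and it suffices to prove $\bf\underset{R[M_0]}\sim\be$. With $M$ finitely generated, $U(M)\subset\gp(M)$ is a finitely generated abelian group, so $U(M)\cong\ZZ^s\times F$ for some $s\ge0$ and a finite abelian group $F$.

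Next I would set up the pullback square of $R$-algebras
$$
\xymatrix{
R[N]\ar[r]\ar[d]&R[M]\ar[d]^{\pi}\\
R\ar[r]&R[U(M)]
}
$$
where $N=(M\setminus U(M))\cup\{1\}$, the map $\pi$ collapses non-unit monomials to zero (so $\ker\pi=R\cdot(M\setminus U(M))$), and the bottom map is the scalar inclusion. This is the torsion analogue of the pullback used in Lemma \ref{reduction}, and the verifications that $N$ is a cancellative submonoid of $M$ whose only unit is $1$, that the square is indeed a pullback, and that $\pi$ is surjective all go through verbatim. The essential new ingredient is the identification $R[U(M)]\cong R[F][t_1^{\pm1},\ldots,t_s^{\pm1}]$: since $F$ is finite, $R[F]$ is a finite (hence integral) extension of $R$, so it is Noetherian of Krull dimension $d$; consequently Theorem \ref{Suslin-main}, applied with coefficient ring $R[F]$, yields the transitivity of the elementary action on $\Um_n(R[U(M)])$ for $n\ge\max(d+2,3)$.

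The reduction is then formal. Given $\bf\in\Um_n(R[M])$, its image under $\pi$ is $R[U(M)]$-equivalent to $\be$ by the preceding paragraph, so Milnor patching (Lemma \ref{Milnor}(a), applicable because $\pi$ is surjective) produces $\bg\in\Um_n(R[N])$ with $\bg\underset{R[M]}\sim\bf$. Assuming Theorem \ref{main} for the trivial-unit monoid $N$, one has $\bg\underset{R[N]}\sim\be$, and hence $\bf\underset{R[M]}\sim\be$. The only conceptual point beyond Lemma \ref{reduction} is the handling of the finite torsion group $F\subset U(M)$; this is the one place where the torsion-free argument does not transcribe directly, and it is resolved cleanly by the dimension-preserving integrality of $R\hookrightarrow R[F]$, so I do not expect a serious obstacle.
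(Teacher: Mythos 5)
Your proposal is correct and follows essentially the same route as the paper: reduce to finitely generated $M$, split $U(M)\cong\ZZ^s\times F$ with $F$ finite, use $\dim R[F]=\dim R$ and Theorem \ref{Suslin-main} over the coefficient ring $R[F]$ to handle $R[U(M)]=R[F][\ZZ^s]$, then apply the pullback square onto $R[(M\setminus U(M))\cup\{1\}]$ together with Milnor patching (Lemma \ref{Milnor}(a)). The extra details you supply (the finite-support reduction, the verification that the square is a pullback, and the integrality argument for the dimension equality) are exactly what the paper leaves implicit.
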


\begin{proof}
We can assume $M$ is finitely generated. Let $U(M)=\ZZ^s\times H$ for a finite group $H$. Since $\dim R=\dim R[H]$, the elementary action on the unimodular $n$-rows over $R[U(M)]=R[H][\ZZ^s]$ is transitive by Theorem \ref{Suslin-main}. The pullback diagram, similar to the first square in the proof of Lemma \ref{reduction}, reduces the transitivity question to the submonoid $(M\setminus U(M))\cup\{1\}\subset M$.
\end{proof}

\begin{lemma}\label{non-monomial}
Assume $R$ is a local Noetherian ring of dimension $d$, $n\ge\max(d+2,3)$, and $B\subset R[t]$ is an $R$-subalgebra, containing $t^m$ for some $m$. Then the elementary action on $\Um_n(B)$ is transitive.
\end{lemma}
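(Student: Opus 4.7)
I would proceed via Milnor patching along the conductor of $B$ in $R[t]$, leveraging Suslin's Theorem~\ref{Suslin-main} on the ambient polynomial ring.

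First, reduce to the case where $B$ is finitely generated over $R$: transitivity of the elementary action on $\Um_n$ commutes with filtered colimits of $R$-subalgebras containing $t^m$. Since $t^m\in B$, the element $t$ satisfies the monic polynomial $X^m-t^m$ over $B$, so $R[t]$ is module-finite over $B$ (generated as a $B$-module by $1,t,\dots,t^{m-1}$). In particular, $B$ is Noetherian of Krull dimension $d+1$, matching that of $R[t]$.

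Next, let $\mathfrak{c}=\{b\in B:bR[t]\subseteq B\}$ be the conductor of $B$ in $R[t]$, a common ideal of both rings. Consider the pullback
\[
\xymatrix{B \ar[r] \ar[d] & R[t] \ar[d] \\ B/\mathfrak{c} \ar[r] & R[t]/\mathfrak{c}}
\]
whose right-hand column is surjective, so Lemma~\ref{Milnor}(a) applies. Transitivity on $\Um_n(R[t])$ is given by Suslin's Theorem~\ref{Suslin-main}. When $\mathfrak{c}\neq 0$, after passing to associated primes one checks that $\mathfrak{c}$ has height at least $1$ in $R[t]$, whence $\dim R[t]/\mathfrak{c}\le d$ and, by the going-up property of the finite extension $B\subseteq R[t]$, also $\dim B/\mathfrak{c}\le d$. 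The classical Serre splitting recalled in the introduction gives transitivity of the elementary action on $\Um_n$ over these quotient rings for $n\ge d+2$, and Milnor patching then transfers transitivity from $R[t]$ and $B/\mathfrak{c}$ to $B$.

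The remaining case $\mathfrak{c}=0$ needs separate handling: then $R[t]$ is a faithful $B$-module, and combined with the constraint $t^m\in B$, a L\"uroth-type analysis of the fraction fields of $B$ and $R[t]$ forces $B$ to be isomorphic as an $R$-algebra to a polynomial ring $R[u]$ (morally $u=t^m$), in which case Suslin's Theorem~\ref{Suslin-main} applies directly. The main obstacle is making this conductor-zero case fully rigorous over a general Noetherian local $R$ rather than just over a field; an alternative strategy would be to identify a different common ideal of $B$ and $R[t]$ that is automatically nonzero, but since the conductor is already the maximal such ideal, this case analysis appears essential.
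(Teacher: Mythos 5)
Your proposal has two genuine gaps, and either one is fatal as written. The first is that your appeal to Lemma \ref{Milnor}(a) runs in the wrong direction. The Milnor patching property, as defined in the paper, takes a row over the corner ring $A_1$ whose image over $A'$ is elementarily trivial and produces a row over the \emph{pull-back} ring $A$ mapping to it; combined with transitivity over $A$ and over $A'$, this yields transitivity over $A_1$. That is exactly how the lemma is used throughout the paper (e.g.\ in Lemma \ref{reduction} and in the proof of Theorem \ref{main}). In your conductor square the unknown ring $B$ sits at the pull-back corner, so what you need is the converse implication: transitivity over $R[t]$ and over $B/\mathfrak{c}$ forces transitivity over $B$. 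Lemma \ref{Milnor}(a) does not assert this, and it is not formal. Indeed, given $\bf\in\Um_n(B)$ you can arrange that its image in $\Um_n(B/\mathfrak{c})$ equals $\be$ by lifting elementary matrices along the surjection $B\to B/\mathfrak{c}$; but then the matrix $\varepsilon\in\E_n(R[t])$ with $\bf\varepsilon=\be$, provided by Theorem \ref{Suslin-main}, need not have entries in $B$, nor need it glue with the identity over $R[t]/\mathfrak{c}$ to anything in $\E_n(B)$. Controlling this discrepancy is precisely the excision-type content (relative elementary groups for the common ideal $\mathfrak{c}$, Vorst's decomposition) that a proof would have to supply and that your one-sentence citation skips.

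The second gap is the conductor-zero case, which is not a boundary nuisance but a failure of your proposed dichotomy: the L\"uroth-type claim is false. Take $B=R[t^4,t^6]$. It contains $t^4$, the extension $B\subset R[t]$ is module-finite, yet $\mathfrak{c}=0$ (every element of $B$ involves only even powers of $t$, so $bt\in B$ forces $b=0$), and $B\cong R[x,y]/(x^3-y^2)$ is not $R$-isomorphic to any polynomial ring $R[u]$ --- already over a field it fails to be integrally closed. Note also that $B=R[t^m]$ itself has zero conductor for $m\ge2$, so the case you defer is the generic one for monomial subalgebras. (A smaller inaccuracy: when $R$ is not a domain, $\mathfrak{c}\ne0$ does not imply $\dim R[t]/\mathfrak{c}\le d$, since a nonzero ideal may lie inside $\mathfrak{p}[t]$ for a minimal prime $\mathfrak{p}\subset R$ of coheight $d$.) For comparison, the paper avoids patching entirely: using Lemma \ref{Lam-height} and the integral extension $R[t^m]\subset B$, it arranges by elementary moves that the last entry $f_n$ is monic in $t$; then $R[t]/(f_n)$ is integral over the local ring $R$, hence semilocal, hence $B/(f_n)$ is semilocal as well, and the stable range results of Proposition 5.6 and Corollary 7.5 in \cite[Ch.~I]{Lam} finish the proof. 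That monic-entry trick replaces your entire case analysis and works uniformly for all $B$.
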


\begin{proof}
For an integral extension of Noetherian rings $R_1\subset R_2$ and an ideal $I\subset R_2$, a standard commutative algebra argument yields $\height_{R_1}(I\cap R_1)\ge\height_{R_2}I$. (See, for instance, Step 2 in the proof of \cite[Lemma 6.5]{Elrows}.)

Let $\bf=(f_1,\ldots,f_n)\in\Um_n(B)$. By Lemma \ref{Lam-height}(a), we can assume $\height(Bf_1+\cdots+Bf_{d+1})\ge d+1$. Because of the  integral extension $R[t^m]\subset B$, the height inequality above and Lemma \ref{Lam-height}(b) imply $L\big((Bf_1+\cdots+Bf_{d+1})\cap R[t^m]\big)=R$. In particular, $(Bf_1+\cdots+Bf_{d+1})\cap R[t^m]$ contains a monic polynomial $g$. Without loss of generality, $\deg(g)>0$. Since $\bf\underset{B}\sim(f_1,\ldots,f_{n-1},f_n+g^c)$ and $f_n+g^c$ is monic for $c\gg0$, we can further assume that $f_n$ is monic. 

Since $R$ is Noetherian and $R[t]$ is module finite over $R[t^m]$, the algebra $B$ is also module finite over $R[t^m]$. In particular, $B$ is a finitely generated $R$-algebra and, in particular, Noetherian. The extension $B\subset R[t]$ is integral. So, every maximal ideal in $B$ lifts to $R[t]$. The extension $R\to R[t]/(f_n)$ is integral and $R$ is local, implying $R[t]/(f_n)$ is semilocal. Since there are only finitely many maximal ideals in $R[t]$, containing $f_n$, the number of maximal ideals in $B$, containing $f_n$, is also finite, i.e., $B/(f_n)$ is semi-local. By Proposition 5.6 and Corollary 7.5 in \cite[Ch. I]{Lam}, $\bf\underset{B}\sim\be$.
\end{proof}

For two monoids $L_1,L_2$ with $U(L_1)=\{0\}$, we introduce the following submonoid
$$
L_1\leftthreetimes L_2=L_1\times L_2\setminus\{(0,x)\ |\ x\in L_2,\ x\not=0\}\subset L_1\times L_2.
$$

\begin{proof}[Proof of Theorem \ref{main}]
By Lemma \ref{unit-torsion}, we can assume that $U(M)$ is trivial.

Using Lemma \ref{seminormal-torsion} and the obvious adjustment of the argument in the proof of Lemma \ref{reduction}, the problem reduces to the transitivity of the action 
\begin{equation}\label{SIX}
\begin{aligned}
\Um_n\big(R\left[M\leftthreetimes\big(\ZZ_{n_1}\times\cdots\times\ZZ_{n_p}\big)\right]\big)&\times\E_n\big(R\left[M\leftthreetimes\big(\ZZ_{n_1}\times\cdots\times\ZZ_{n_p}\big)\right]\big)\\
&\longrightarrow\Um_n\big(R\left[M\leftthreetimes\big(\ZZ_{n_1}\times\cdots\times\ZZ_{n_p}\big)\right]\big)
\end{aligned}
\end{equation}
where $R$ and $n$ are as in Theorem \ref{main}, $M$ is a torsion free (not necessarily affine) monoid with trivial $U(M)$, and $n_1,\ldots,n_p\in\NN$. 

We will prove the transitivity of the action (\ref{SIX}) by induction on $p$. 

The base case $p=0$ corresponds to torsion free monoids, already considered in Section \ref{Global-pyramidal}. Assume the transitivity holds for $p-1$. At this points we can also assume $M$ is an affine positive monoid. In this case, pulling back a grading $R[M]=R\oplus R_1\oplus\cdots$ as in Lemma \ref{Gordan}(b) along the projection $M\leftthreetimes\big(\ZZ_{n_1}\times\cdots\times\ZZ_{n_p}\big)\to M$, we obtain a grading $R\left[M\leftthreetimes\big(\ZZ_{n_1}\times\cdots\times\ZZ_{n_p}\big)\right]=R\oplus S_1\oplus\cdots$ and so, by Proposition \ref{Quillen}, $R$ can be assumed to be local.

Consider the following pull-back diagram with the vertical identity embeddings:

$$
\begin{aligned}
&\xymatrix{
\Lambda\ar[rr]\ar[ddd]&&
R\left[M\leftthreetimes\left(\ZZ_{n_1}\times\cdots\times\ZZ_{n_p}\right)\right]\ar[ddd]^{\bigcap}\\
\\
\\
R[\ZZ_+]\left[M\leftthreetimes\left(\ZZ_{n_1}\times\cdots\times\ZZ_{n_{p-1}}\right)\right]\ar[rr]_{\pi}&&
R[\ZZ_{n_p}]\left[M\leftthreetimes\left(\ZZ_{n_1}\times\cdots\times\ZZ_{n_{p-1}}\right)\right]
}
\end{aligned},
$$
where:
\begin{enumerate}[\rm$\centerdot$]
\item
$\Lambda=A+B$,
\item $A=R\left[(M\leftthreetimes\ZZ_+)\leftthreetimes\left(\ZZ_{n_1}\times\cdots\times
\ZZ_{n_{p-1}}\right)\right]$,
\item $B$ is the subalgebra $R[t^{n_p},t^{n_p+1}-t,\ldots,t^{2n_p-1}-t^{n_p-1}]\subset R[\ZZ_+]$,
\item $\pi$ is induced by $t\mapsto x$ for some generator $x\in\ZZ_{n_p}$.
\end{enumerate}

Since $\dim R[\ZZ_{n_p}]=d$, the induction hypothesis implies that the elementary action on the unimodular $n$-rows over the ring at the lower-right corner of the diagram above is transitive. Therefore, by Lemma \ref{Milnor}(a), it is enough to show that the elementary action on $\Um_n(\Lambda)$ is also transitive.

The very last pull-back diagram to be used by us is the following
\begin{align*}
\xymatrix{
A\ar[d]\ar[r]&\Lambda\ar[d]\\
R\ar[r]&B
}
\end{align*}
with the vertical surjective $R$-homomorphisms, induced by mapping all nontrivial monoid elements to $0\in R$, and the natural injective horizontal maps. Since $R$ local, the elementary action on $\Um_n(B)$ is transitive by Lemma \ref{non-monomial}. But the elementary action on $\Um_n(A)$ is also transitive by the induction hypothesis because $M\leftthreetimes\ZZ_+$ is torsion free. Thus, Lemma \ref{Milnor}(a) completes the argument. 
\end{proof}

\medskip\noindent\emph{Question.} Does Theorem \ref{main} extend to \emph{all} commutative monoids?


\bibliographystyle{plain}
\bibliography{bibliography}

\end{document}